\documentclass[12pt,reqno]{amsart}
\usepackage[utf8]{inputenc}
\usepackage{amsmath} 
\usepackage{amsthm} 
\usepackage{amssymb} 
\usepackage{amscd} 
\usepackage{emptypage}
\usepackage{enumitem} 
\usepackage{mathtools} 
\usepackage{mathrsfs} 
\usepackage{hyperref} 
\hypersetup{
	linktocpage = true,
	colorlinks=true,
	linkcolor=red,
}
\usepackage{esint} 
\usepackage{dsfont} 

\usepackage{soul}

\usepackage{graphicx} 
\usepackage{subcaption}


\usepackage[colorinlistoftodos]{todonotes} 
\setlength{\marginparwidth}{2.3cm} 


\newtheorem{theorem}{Theorem}[section]
\newtheorem*{theorem*}{Theorem}
\newtheorem{proposition}[theorem]{Proposition}
\newtheorem{lemma}[theorem]{Lemma}
\newtheorem{corollary}[theorem]{Corollary}

\theoremstyle{definition}
\newtheorem{definition}[theorem]{Definition}

\theoremstyle{remark}
\newtheorem{remark}[theorem]{Remark}
\newtheorem*{remark*}{Remark}


\newcommand{\con}[1]{\mathbb{#1}}
\newcommand{\R}{\con{R}} 

\usepackage{euscript}
\usepackage{relsize}
\DeclareMathAlphabet{\mathpzc}{OT1}{pzc}{m}{it}
\DeclareMathAlphabet\euscr{T1}{qzc}{m}{n}

\makeatletter
\newcommand{\leqnomode}{\tagsleft@true\let\veqno\@@leqno}
\newcommand{\reqnomode}{\tagsleft@false\let\veqno\@@eqno}
\makeatother


\newcommand{\innerprod}[2]{#1 \cdot #2}

\newcommand{\s}{s}
\newcommand{\fraclaplace}{(-\Delta)^\s}
\newcommand{\fraclaplacian}{(-\Delta)^\s}

\renewcommand{\d}{\,\mathrm{d}} 


\newcommand\beqc[1]{\left\{\begin{array}{#1}}
	\newcommand\eeqc{\end{array} \right.}
\def\PDEsystem{rcll}
\def\bmatrix{\begin{pmatrix}}
	\def\ematrix{\end{pmatrix}}

\DeclareMathOperator{\sign}{sign}

\DeclareMathOperator{\PV}{P.\! V.\! }

\let\div\relax
\DeclareMathOperator{\div}{div}

\makeatletter
\makeatother

\renewcommand{\d}{\,\mathrm{d}} 

\def\PDEsystem{rcll}
\def\bmatrix{\begin{pmatrix}}
	\def\ematrix{\end{pmatrix}}
\let\div\relax
\DeclareMathOperator{\div}{div}

\newcommand{\abs}[1]{\left\lvert#1\right\rvert}

\renewcommand{\div}{\textnormal{div}}

\newcommand{\suchthat}{\,:\,}

\renewcommand{\d}{\mathop{}\!\mathrm{d}}

\setlength{\textwidth}{155mm}
\setlength{\textheight}{210mm}
\oddsidemargin=5mm
\evensidemargin=5mm

\numberwithin{equation}{section}

\graphicspath {{Images/}}

\hyphenation{auto-ma-ti-cally}


\title[A Weierstrass extremal field theory for the fractional Laplacian]{A Weierstrass extremal field theory for the fractional Laplacian}
\author{Xavier Cabr\'e}
\address{X. Cabr\'e \textsuperscript{1,2,3}
	\newline
	\textsuperscript{1} ICREA, Pg. Lluis Companys 23, 08010 Barcelona, Spain
	\newline
	\textsuperscript{2} Universitat Polit\`ecnica de Catalunya, Departament de Matem\`{a}tiques and IMTech, 
	Diagonal 647, 08028 Barcelona, Spain
	\newline
	\textsuperscript{3} Centre de Recerca Matem\`{a}tica, Edifici C, Campus Bellaterra, 08193 Bellaterra, Spain}
\email{xavier.cabre@upc.edu}
\author{I\~{n}igo U. Erneta}
\address{I.U. Erneta \textsuperscript{2,3},
	\newline
	\textsuperscript{2} 
	Universitat Polit\`{e}cnica de Catalunya, Departament de Matem\`{a}tiques, Diagonal 647, 08028 Barcelona, Spain
	\newline
\textsuperscript{3} 
Centre de Recerca Matem\`{a}tica, Edifici C, Campus Bellaterra, 08193 Bellaterra, Spain}
\email{inigo.urtiaga@upc.edu}
\author{Juan-Carlos Felipe-Navarro} 
\address{J.C. Felipe-Navarro \textsuperscript{4}
\newline
\textsuperscript{4}
 University of Helsinki, Department of Mathematics and Statistics, Pietari Kalmin katu 5, 00014 Helsinki, Finland} 
\email{juan-carlos.felipe-navarro@helsinki.fi}
\thanks{
The three authors are supported by grants PID2021-123903NB-I00 and RED2018-102650-T funded by MCIN/AEI/10.13039/501100011033 and by ``ERDF A way of making Europe''.
The second author has received founding from the MINECO grant MDM-2014-0445-18-1.
The third author has been supported by the Academy of Finland and the European Research Council (ERC) under the European Union's Horizon 2020 research and innovation program (grant agreement No 818437).
This work is also supported by the Spanish State Research Agency, through the Severo Ochoa and Mar\'{i}a de Maeztu Program for Centers and Units of Excellence in R\&D (CEX2020-001084-M)}

\keywords{Weierstrass field theory, field of extremals, fractional Laplacian, calibration, null-Lagrangian, minimality}

\newcommand{\domain}{Q(\Omega)}
\renewcommand{\aa}{a}
\newcommand{\bb}{b}

\newcommand{\energygen}{\mathcal{E}}
\newcommand{\calibgen}{\mathcal{C}}
\newcommand{\admissiblegen}{\mathcal{A}}

\newcommand{\energy}{\mathcal{E}_{\rm N}}

\newcommand{\lag}{G_{\rm N}}

\newcommand{\region}{\mathcal{G}}

\newcommand{\energyloc}{\mathcal{E}_{\rm L}}
\newcommand{\calibloc}{\mathcal{C}_{\rm L}}
\newcommand{\lagloc}{G_{\rm L}}
\newcommand{\oploc}{\mathcal{L}_{\rm L}}
\newcommand{\neumannloc}{\mathcal{N}_{\rm L}}
\newcommand{\regionloc}{\mathcal{G}_{\rm L}}
\newcommand{\admissibleloc}{\mathcal{A}_{\rm L}}

\newcommand{\energyfrac}{\mathcal{E}_{s, F}}
\newcommand{\gagliardo}{\mathcal{E}_s}
\newcommand{\gagliardoeps}{\mathcal{E}_{K_\varepsilon}}

\newcommand{\energyquad}{\mathcal{E}_{1,F}}
\newcommand{\calibfrac}{\mathcal{C}_{s, F}}

\newcommand{\calibgags}{\mathcal{C}_{s}}
\newcommand{\calibgageps}{\mathcal{C}_{s}^{\varepsilon}}
\newcommand{\calibquad}{\mathcal{C}_{1, F}}
\newcommand{\regionfrac}{\mathcal{G}}

\newcommand{\admissiblefrac}{\mathcal{A}_{s}}

\newcommand{\energyp}{\mathcal{E}_{p\text{-Dir}}}
\newcommand{\calibp}{\mathcal{C}_{p\text{-Dir}}}

\newcommand{\calibperloc}{\mathcal{C}_{\mathcal{P}_{\rm L}}}
\newcommand{\perim}{\mathcal{P}_{\rm N}}
\newcommand{\calibper}{\mathcal{C}_{\mathcal{P}_{\rm N}}}
\newcommand{\perimloc}{\mathcal{P}_{\rm L}}

\newcommand{\normalgraph}{\nu_{\Gamma_{w}}}
\newcommand{\excess}{{\rm E}}

\newcommand{\lsn}{L^{1}_s(\R^n)}

\newcommand{\hgag}{\dot{H}^s(\Omega)}
\newcommand{\hbdy}{\mathcal{A}_{s, t_0}}

\begin{document}
\begin{abstract}
In this paper we extend, for the first time, part of the Weierstrass extremal field theory in the Calculus of Variations to a nonlocal framework.
Our model case is the energy functional for the fractional Laplacian (the Gagliardo-Sobolev seminorm), for which such a theory was still unknown.

We build a null-Lagrangian and a calibration for nonlinear equations involving the fractional Laplacian in the presence of a field of extremals.  Thus, our construction assumes the existence of a family of solutions to the Euler-Lagrange equation whose graphs produce a foliation. Then, the minimality of each leaf in the foliation follows from the existence of the calibration.
As an application, we show that monotone solutions to fractional semilinear equations are minimizers.

In a forthcoming work we generalize the theory to a wide class of nonlocal elliptic functionals and give an application to the viscosity theory.
\end{abstract}
\maketitle


\section{Introduction}

The Weierstrass extremal field theory, a classical tool from the Calculus of Variations, provides 
a sufficient condition for the minimality of critical points.
Namely, if an extremal of an elliptic functional can be embedded in a family of critical points whose graphs produce a foliation (in particular, the graphs do not intersect each other), then the given extremal is a minimizer.
The proof of this result is based on the construction of a \emph{calibration}, that is, an auxiliary functional satisfying certain properties (see Definition~\ref{def:calib}).
This theory has found important applications in the context of minimal surfaces, among others.

The purpose of this paper is to extend the classical Weierstrass field theory to the setting of functionals associated to nonlocal equations, starting here with the simplest one.
Our main result is the construction of a calibration for the fractional functional
\[ \energyfrac(w) := \dfrac{c_{n,s}}{4}\iint_{\domain} \dfrac{|w(x)-w(y)|^2}{|x-y|^{n+2s}} \d x \d y - \int_{\Omega} F(w(x)) \d x,\]
where $s \in (0,1)$, $c_{n,s}$ is a positive normalizing constant, $F \in C^1(\R),$
\begin{equation} \label{defQOmega}
	\domain := 
	(\R^{n}\times \R^{n})\setminus(\Omega^c \times \Omega^c) =
	(\Omega \times \Omega)\cup
	(\Omega \times \Omega^c) \cup (\Omega^c \times \Omega),
\end{equation}
and $\Omega \subset \R^n$ is a given bounded domain. Here and throughout the paper, $\Omega^c = \R^n\setminus\Omega$.

The Euler-Lagrange equation for the functional~$\energyfrac$ is the semilinear equation 
\[ (-\Delta)^s u = F'(u) \quad \text{ in } \Omega,\]
where
\[ (-\Delta)^s u \, (x) = c_{n,s} \PV \int_{\R^{n}} \dfrac{u(x)-u(y)}{|x-y|^{n+2s}} \d y\]
is the fractional Laplacian and $\PV$ stands for the principal value.

Our construction does not use the Caffarelli-Silvestre extension problem for the fractional Laplacian.
This is relevant, since in a subsequent paper~\cite{CabreErnetaFelipe-Calibration2} it allows us to treat more general nonlocal functionals of the form\footnote{The subindices L and N will be used throughout the work to denote local and nonlocal objects, respectively.}
\begin{equation*}
\energy(w) := \dfrac{1}{2}\iint_{\domain} \lag(x, y, w(x), w(y)) \d x \d y,
\end{equation*}
where the Lagrangian $\lag(x, y, \aa, \bb)$ is required to satisfy the natural ellipticity condition 
\begin{equation}\label{ellipticity}
	\partial^2_{\aa \bb} \lag(x, y, \aa, \bb) + \partial^2_{\aa \bb} \lag(y, x, \bb, \aa) \leq 0.
\end{equation}

As in the classical local theory, our calibration is built in the presence of a field of extremals, namely, a one-parameter family of critical points of $\energyfrac$ (or of $\energy$) whose graphs form a foliation (see Definition~\ref{def:foliation}). 
In particular, the graphs do not intersect each other.
For the construction, it suffices to have subsolutions, respectively supersolutions, on each respective side of a given extremal ---something very useful for some applications.

As a first application of our
calibration, we establish that monotone solutions to translation invariant nonlocal equations are minimizers.
This is related to a celebrated conjecture of De Giorgi for the Allen-Cahn equation.
More precisely, 
if $u$ is a solution satisfying $\partial_{x_n } u > 0$
in $\R^{n}$,
then it is a minimizer\footnote{
Monotone solutions are easily seen to be strictly stable solutions and, as a result, to be minimizers with respect to small compactly supported perturbations. Our result gives a more precise neighborhood in which the solution is minimizing.} among functions $w$ such that
\[ 
\lim_{\tau \to -\infty} u(x', \tau) \leq w(x', x_n) \leq \lim_{\tau \to +\infty} u(x', \tau)
\]
for $x = (x', x_n) \in \R^{n-1}\times \R$.
This result was only known for those nonlocal functionals for which an existence and regularity theory of minimizers is available.
We elaborate on this further in Subsection~\ref{subsection:applications}.

As a second application, in a forthcoming paper~\cite{CabreErnetaFelipe-Calibration2} we establish that minimizers of nonlocal elliptic functionals are viscosity solutions.
Although this was previously known for problems where a weak comparison principle is available (see \cite{ServadeiValdinoci, KorvenpaaKuusiLindgren, BarriosMedina}), we can prove it in more general scenarios by using the calibration technique.
This has been motivated by the theory of nonlocal minimal surfaces, where the calibration argument of the first author~\cite{Cabre-Calibration} greatly simplified the original proof (that minimizers are viscosity solutions) from~\cite{CaffarelliRoquejoffreSavin}.

\subsection{The notion of calibration} \label{subsection:calibration}

A fundamental problem in the Calculus of Variations consists of finding conditions for a function to be a minimizer of a given functional.
More precisely, given a functional $\energygen\colon \admissiblegen \to \R$ defined on some set of admissible functions $\admissiblegen$, and given $u \in \admissiblegen$, one wishes to know whether $u$ minimizes $\energygen$ among competitors in~$\admissiblegen$ having the same Dirichlet condition as $u$.

In classical local problems, the Dirichlet condition refers to the value of $u$ on the boundary of the domain $\Omega,$ while in nonlocal problems one prescribes the value in all the exterior of $\Omega,$ namely, in $\Omega^c = \R^n \setminus \Omega.$

One effective strategy to establish the minimality of a given function $u\in \admissiblegen$ consists of constructing a calibration:
\begin{definition}
	\label{def:calib}
	A functional $\calibgen\colon \admissiblegen \to \R$ is a \emph{calibration} for the functional $\energygen$ and the 
admissible function
$u \in \admissiblegen$ if the following conditions hold:
	\begin{enumerate}[label= ($\mathcal{C}$\arabic*)]
		\item \label{def:calib:2} $\calibgen(u) = \energygen(u)$.
		\item \label{def:calib:3} $\calibgen(w) \leq \energygen(w)$ for all $w \in \admissiblegen$ with the same Dirichlet condition as $u$.
		\item \label{def:calib:1} $\calibgen(w) = \calibgen(\widetilde{w})$ for all $w, \widetilde{w} \in \admissiblegen$ with the same Dirichlet condition as $u$.
	\end{enumerate}
	Functionals satisfying~\ref{def:calib:1} are known as \emph{null-Lagrangians} 
	(see, for instance, \cite[Chapter~8]{Evans} and \cite[Section~1.4]{GiaquintaHildebrandt}).
	It is, however, convenient to relax this last condition to the less stringent
	\begin{enumerate}[label= ($\mathcal{C}$\arabic*$'$)]
	\setcounter{enumi}{2}	
	\item \label{def:calib:1:prime} $\calibgen(u) \leq \calibgen(w)$ for all $w \in \admissiblegen$ with the same Dirichlet condition as $u$.	
	\end{enumerate}
	In this work we still refer to functionals satisfying~\ref{def:calib:2},~\ref{def:calib:3}, and~\ref{def:calib:1:prime} as calibrations.\footnote{In the literature, functionals satisfying~\ref{def:calib:2},~\ref{def:calib:3}, and~\ref{def:calib:1:prime} are sometimes called \emph{subcalibrations}.}
\end{definition}

Recall the meaning of the Dirichlet condition for local and nonlocal problems given right before Definition~\ref{def:calib}.

Once a calibration is available, the minimality of $u$ follows immediately both in the local and nonlocal cases.
Indeed, if $\calibgen$ is a calibration for $\energygen$ and $u \in \admissiblegen,$ then, for every $w\in\admissiblegen$ with the same Dirichlet condition as $u$, applying~\ref{def:calib:2},~\ref{def:calib:1:prime}, and~\ref{def:calib:3} (in this order) we obtain
\[
\energygen(u) = \calibgen(u) \leq \calibgen(w) \leq \energygen(w).
\]
Therefore, $u$ is a minimizer. 

\subsection{The classical theory of calibrations}

Calibrations arose in the development of the classical theory of the Calculus of Variations.
Historically, a fundamental question was to determine necessary and sufficient conditions for a function to be a minimizer.
A satisfactory answer has been obtained for 
functionals ---that we often call ``energies'', following PDE terminology--- of the form 
\begin{equation}
	\label{Intro_loc:energy}
	\energyloc(w) := \int_{\Omega} \lagloc \big(x, w(x), \nabla w(x)\big) \d x.
\end{equation}
In this framework, the function $\lagloc(x, \lambda, q)$ is called the Lagrangian of $\energyloc$.

A necessary condition for minimality is the vanishing of the first variation of $\energyloc$.
That is, every minimizer is a critical point of $\energyloc$ (an \emph{extremal}) and must satisfy the associated Euler-Lagrange equation.
If the Lagrangian $\lagloc(x, \lambda, q)$ is convex in the variables $(\lambda, q)$, then the functional $\energyloc$ is convex and, in this case, every extremal is a minimizer.
Although many models from Physics exhibit such a convexity property, it is well known that very relevant nonconvex energies appear in applications.
This is the case of the Allen-Cahn energy, among many others.
For such energy functionals, the Dirichlet problem may admit several extremals, not all of them being minimizers.
Still, if $\lagloc$ is not convex in $(\lambda, q)$, one often has that the Lagrangian $\lagloc(x, \lambda, q)$ is convex with respect to the gradient variable $q$, which amounts to the \emph{ellipticity} of the problem.

For nonconvex elliptic problems, one is interested in having sufficient conditions for an extremal to be a minimizer.
Two of such conditions, due to Jacobi and Weierstrass, are well known.
First, if a solution is strictly stable,\footnote{A solution $u$ is said to be \emph{strictly stable} if the principal eigenvalue of the linearized equation at $u$ is positive.} 
then it is a \emph{local minimizer} in a certain topology, that is, a minimizer in a small neighborhood.
This is Jacobi's condition.
The real difficulty is proving minimality in a larger, more interesting class of competitors (or perhaps even \emph{absolute minimality}).
In this direction, the Weierstrass sufficient condition yields minimality among functions taking values in a precise region.
To further elaborate on this, we need to introduce the notion of \emph{field}.
Essentially, this is a collection of ordered functions $u^t:\overline{\Omega}\to \R$, with $t$ in some interval $I\subset \R$, enjoying some regularity for the joint function $(x, t) \mapsto u^{t}(x)$.
The key point is that the graphs of these functions produce a foliation of a certain region $\region$ in $\R^n \times \R$, which allows to carry out a subtle convexity argument to bound the nonconvex functional by below with a calibration.

While fields are a classical concept in local problems, we can extend their definition to include both the local and nonlocal settings, as follows.

\begin{definition}\label{def:foliation}
Given a domain $D \subset \R^n$ (not necessarily bounded) and an interval~$I \subset \R$ (not necessarily bounded, nor open),
we say that a family $\{u^{t}\}_{t \in I}$ of functions $u^{t} \colon \overline{D} \to \R$ is a \emph{field in $D$} if
\begin{itemize}
\item the function $(x, t) \mapsto u^t(x)$ is continuous in $\overline{D} \times I$;
\item for each $x \in \overline{D}$, the function $t \mapsto u^t(x)$ is $C^1$ and increasing in $I$. 
\end{itemize}
We say that $\{u^{t}\}_{t \in I}$ is a \emph{$C^2$ field in $D$} if, additionally, the function $(x, t) \mapsto u^{t}(x)$ is $C^2$ in $\overline{D}\times I$.
\end{definition}

Given a functional $\energygen$ acting on functions defined in $\overline{D}$, and given a subdomain $\Omega\subset D$, we say that $\{u^{t}\}_{t \in I}$ is a 
\emph{field of extremals}\footnote{The term \emph{extremal field} is also often used in the literature, but we find it  ambiguous.} 
in $\Omega$ (roughly speaking, since we should refer to  $\energygen$, $D$, and $\Omega$) when it is a field in $D$ and each of the functions $u^t$ is a critical point of $\energygen$ in $\Omega$.

In the local setting, we will take $D = \Omega$. For nonlocal Lagrangians we will set $D = \R^n$ and $\Omega \subset \R^n$.

Given a field in $D$ as above, the region
\[
\region = \{ (x, \lambda) \in \overline{D}\times \R \colon \lambda = u^t(x) \text{ for some } t \in I \} \subset \R^n \times \R
\]
is foliated by the graphs of the functions $u^t$, which do not intersect each other (since $u^t(x)$ is increasing in $t$). 
In particular, we can uniquely define a \emph{leaf-parameter function}
\begin{equation}
\label{Eq:DefLeafParameter}
t \colon \mathcal{G} \to I, \quad (x, \lambda) \mapsto t(x, \lambda) \quad \text{ determined by } \quad u^{t(x,\lambda)}(x) = \lambda.
\end{equation}
The function $t$ is continuous in $\region$ by the assumptions in Definition~\ref{def:foliation}.
We will often refer to the functions $u^t$ (or their graphs) as the ``{\it leaves}'' of the field.

Having defined what a field is,
we can now state the classical theorem of Weierstrass, which was first proven for scalar ODEs:
\begin{equation}\label{thm:w}
\parbox{.85\textwidth}{
\textbf{Weierstrass sufficient condition.}
{\it
For the functional $\energyloc$ in \eqref{Intro_loc:energy}, 
assuming ellipticity (i.e., that the Lagrangian $\lagloc(x, \lambda, q)$ is convex in the gradient variable $q$),
if a critical point is embedded in a field of extremals,\footnotemark \ then the critical point is a minimizer
among functions taking values in the foliated region $\region$ and having the same boundary values as the given critical point.}
}
\end{equation}
\footnotetext{\label{footnote:sub:super}In fact, it suffices that the leaves of the field above and below the graph of the given critical point are, respectively, super and subsolutions to the Euler-Lagrange equation. This, which is well known, will be easily seen within the proofs of our main results.} 

The proof of \eqref{thm:w} is based on the construction of a calibration.
For this, given an interval $I \subset \R$, a bounded domain $\Omega \subset \R^n$, and a $C^2$ field of extremals $\{u^t\}_{t \in I}$ in $\Omega$, one considers the set of admissible functions $\admissibleloc = \{w \in C^{1}(\overline{\Omega})\suchthat {\rm graph }\, w \subset \region\}.$
Then, the functional $\calibloc \colon \admissibleloc \to \R$, defined through the Legendre transform of the Lagrangian $\lagloc$ as 
\begin{equation}\label{Intro_calib:loc}
	\begin{split}
		\calibloc(w) := & \int_{\Omega} \!\Big\{\innerprod{\partial_q \lagloc(x, u^{t}(x), \nabla u^{t}(x))}{\big(\nabla w(x) -\nabla u^{t}(x)\big)}\Big\}  \Big|_{t = t(x, w(x))} \d x\\
		& \hspace{1.5cm} + \int_{\Omega} \lagloc(x, u^{t}(x), \nabla u^{t}(x))\big|_{t = t(x, w(x))} \d x,
	\end{split}
\end{equation}
is a calibration for $\energyloc$ and each critical point $u^{t_0}, t_0\in I$.
While condition~\ref{def:calib:2} in Definition~\ref{def:calib} follows directly from~\eqref{Intro_calib:loc}, and condition~\ref{def:calib:3} amounts to ellipticity, it is a remarkable fact that the null-Lagrangian property~\ref{def:calib:1} holds. Its proof will be recalled in Section~\ref{section:local}.

As an illustrative example, in the presence of a field of extremals, the functional\footnote{The subindex $1$ in the definition of the energy functional refers to the fractional parameter $s$ in~$\energyfrac,$ which are the nonlocal analogues of $\energyquad$ treated later.
As $s$ tends to $1,$ one recovers $\energyquad$ from $\energyfrac$ after a suitable normalization; see, for instance,~\cite{DiNezzaPalatucciValdinoci-HitchhikerGuide}.}
\[ 
\energyquad(w) = \dfrac{1}{2} \int_{\Omega} |\nabla w(x)|^2 \d x - \int_{\Omega} F(w(x)) \d x 
\]
(which is typically nonconvex), admits a calibration. It is given by
\begin{equation}\label{calib:quadratic}
\calibquad(w) 
= \int_{\Omega} \!\Big\{\nabla u^t(x) \cdot (\nabla w(x) - \nabla u^{t}(x)) + \dfrac{1}{2} |\nabla u^t(x)|^2 \Big \}\Big|_{ t = t(x, w(x))}\!\! \d x - \int_{\Omega} \! F(w(x)) \d x.
\end{equation}
Notice that, although the functional $\energyquad$ is not convex, its Lagrangian is elliptic (i.e., convex in the gradient variable $q=\nabla w(x)$).

The Weierstrass sufficient condition \eqref{thm:w} naturally leads to the question of when it is possible to embed a solution of the Euler-Lagrange equation into a field of extremals in a large portion of space.
An important case corresponds to functionals which are invariant with respect to some translations.
A first example are those Lagrangians which do not depend on a direction of space and, at the same time, the extremal is monotone in that same direction.
Such a solution can be translated along the invariant direction to produce a field of extremals.
This applies to layer solutions of the Allen-Cahn equation; see Subsection~\ref{subsection:applications} below.
A second example are those Lagrangians $G_L(x,\lambda,q)$ which do not depend on the function variable $\lambda$.
In this case, a field can be obtained by translating the solution in the vertical direction.
This can be used, for instance, to show that minimal graphs are minimizing minimal surfaces;
see Section~\ref{section:nonlocal:perimeter}.

Fields of extremals can also be built in the presence of a concrete explicit solution.
Here, using the precise solution and PDE at hand, one may be able to construct a field in a more or less explicit way.
This approach has been applied in the theory of minimal surfaces to establish the minimality of Simons and Lawson cones, as explained also in Section~\ref{section:nonlocal:perimeter}.

\subsection{Nonlocal calibrations}

While the theory of calibrations for local equations is well understood, there are only two papers, to the best of our knowledge, dealing with nonlocal ones. 
In~\cite{Cabre-Calibration} the first author found an explicit calibration for the fractional perimeter, as explained in Section~\ref{section:nonlocal:perimeter}.
Pagliari~\cite{Pagliari} investigated the abstract structure of calibrations for the fractional total variation.\footnote{This functional involves the fractional perimeter of each sublevel set of a given function. The author succeeded in constructing a calibration to prove that halfspaces are minimizers, but other fields of extremals are not mentioned in this work.}

We now present our main result, which builds a calibration for the functional 
\begin{equation}\label{def:frac:energy}
\energyfrac(w) = \dfrac{c_{n,s}}{4}\iint_{\domain} \dfrac{|w(x)-w(y)|^2}{|x-y|^{n+2s}} \d x \d y - \int_{\Omega} F(w(x)) \d x
\end{equation}
in the presence of a field of extremals.
Recall \eqref{defQOmega} for the meaning of $Q(\Omega)$, Definition~\ref{def:foliation} for the notion of field, and~\eqref{Eq:DefLeafParameter} for the leaf-parameter function $t$.
The calibration properties~\ref{def:calib:2},~\ref{def:calib:3}, \ref{def:calib:1}, and~\ref{def:calib:1:prime} have been introduced in Definition~\ref{def:calib}.
As we will explain in Section~\ref{section:fractional:laplacian}, the regularity assumptions on the field can be significantly weakened.
\begin{theorem}\label{thm:frac:calibration}
Let $I \subset \R$ be an interval, $\Omega \subset \R^n$ a bounded domain, and $s \in (0,1)$.
Let $\{u^{t}\}_{t \in I}$ be a $C^2$ field in $\R^n$ in the sense of Definition \ref{def:foliation}
satisfying
\[
|u^{t}(x)| + |\partial_t u^{t}(x)| \leq C \quad \text{ for all } x \in \R^n \text{ and } t \in I,
\]
for some constant $C$.
Consider the admissible functions
\[
\admissiblefrac =
 \{ w \in 
 C^0(\R^n) \cap L^{\infty}(\R^n) 
 \suchthat  \, {\rm graph }\, w \subset \regionfrac \},
\]
where
\[
\regionfrac = \{ (x, \lambda) \in \R^n \times \R \colon \lambda = u^t(x) \text{ for some } t \in I\}.
\]

Given $t_0 \in I$ and $F \in C^1(\R)$, let 	$\calibfrac$ be the functional
\begin{align}\label{def:fraccalib}
\calibfrac(w) & :=  c_{n,s} \PV \iint_{\domain} \int_{u^{t_0}(x)}^{w(x)} \dfrac{u^t(x)-u^t(y)}{|x-y|^{n+2s}}\bigg|_{t = t(x, \lambda)} \d \lambda \d x \d y- \int_{\Omega} F(w(x)) \d x\\
		& \hspace{3cm} \quad\,  + \dfrac{c_{n,s}}{4}\iint_{\domain} \dfrac{|u^{t_0}(x)-u^{t_0}(y)|^2}{|x-y|^{n+2s}} \d x \d y
\end{align}
defined for $w \in \admissiblefrac$, where $c_{n,s}$ is the positive constant in \eqref{def:frac:energy}.

Taking $\calibgen = \calibfrac$ and $\energygen = \energyfrac$ as in~\eqref{def:frac:energy},
we have the following:
	\begin{enumerate}[label={\rm(\alph*)}]
		\item $\calibfrac$ satisfies~{\rm\ref{def:calib:2}} and~{\rm\ref{def:calib:3}} with $u = u^{t_0}$.
		\item Assume in addition that the family $\{u^{t}\}_{t \in I}$ satisfies
		\begin{equation*}\label{frac:sup:sub}
			\begin{split}
				\fraclaplacian u^{t} - F'(u^t) \geq 0 \quad \text{ in } \Omega 
				\quad
				\text{ for } t \geq t_0,
				\\
				\fraclaplacian u^{t} - F'(u^t) \leq 0 \quad \text{ in } \Omega \quad \text{ for } t \leq t_0.
			\end{split}
		\end{equation*}
		Then, $\calibfrac$ satisfies~{\rm\ref{def:calib:1:prime}} with $u = u^{t_0}$.
		In particular, $u^{t_0}$ minimizes $\energyfrac$ among functions $w$ in $\admissiblefrac$ such that $w \equiv u^{t_0}$ in $\Omega^c.$
		
		\item Assume in addition that $\{u^t\}_{t \in I}$ is a field of extremals in $\Omega$, that is,
		a field in $\R^n$ satisfying
		\begin{equation*}
			\begin{split}
				\fraclaplacian u^{t} - F'(u^t) = 0 \quad \text{ in } \Omega 
				\quad
				\text{ for all } t \in I.
			\end{split}
		\end{equation*}
		Then, the functional $\calibfrac$ satisfies~{\rm\ref{def:calib:1}} with $u = u^{t_0}$.
		Therefore, $\calibfrac$ is a calibration for $\energyfrac$ and $u^{t_0}$.
		As a consequence, for every $t \in I$, the extremal $u^t$~minimizes $\energyfrac$ among functions $w$ in $\admissiblefrac$ such that $w \equiv u^{t}$ in $\Omega^c.$
	\end{enumerate}
\end{theorem}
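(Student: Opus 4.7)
The plan is to prove (a), (c), and (b) in that order, deriving a single master identity that yields the null-Lagrangian property (C3) of (c) and the relaxed property (C3$'$) of (b) as special cases.

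For (a), condition (C1) is immediate: substituting $w = u^{t_0}$ collapses the $\lambda$-integral, so the remaining two terms of $\calibfrac$ equal $\energyfrac(u^{t_0})$. For (C2) the $F$-contributions coincide and cancel. I would then symmetrize the kernel under $x \leftrightarrow y$ and perform the change of variables $\lambda = u^t(x)$ (and $\lambda = u^t(y)$ in the symmetric half) to transfer the problem to the leaf parameter $t$. Writing $a(t) := u^t(x)$, $b(t) := u^t(y)$, $t_1 := t(x, w(x))$, and $t_2 := t(y, w(y))$, the inequality reduces to the pointwise claim
\begin{equation*}
\Psi(t_1, t_2) := \frac{(a(t_1) - b(t_2))^2 - (a(t_0) - b(t_0))^2}{4} - \frac{1}{2}\int_{t_0}^{t_1}(a - b)\, a'\, dt - \frac{1}{2}\int_{t_0}^{t_2}(b - a)\, b'\, dt \geq 0.
\end{equation*}
Direct differentiation gives $\partial_{t_1}\Psi = \tfrac{1}{2}a'(t_1)[b(t_1) - b(t_2)]$ and $\partial_{t_2}\Psi = \tfrac{1}{2}b'(t_2)[a(t_2) - a(t_1)]$. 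On the diagonal $t_1 = t_2 = t$ the integrand $(a-b)(a'-b')$ is $\tfrac{d}{dt}\bigl[\tfrac{1}{2}(a-b)^2\bigr]$, so the line-integral telescopes and cancels the algebraic part, giving $\Psi \equiv 0$ on the diagonal. Since $a$ and $b$ are strictly increasing in $t$ (from the field condition $\partial_t u^t > 0$), integrating $\partial_{t_1}\Psi$ along the horizontal segment from $(t_2, t_2)$ to $(t_1, t_2)$ yields $\Psi(t_1, t_2) \geq 0$ in every configuration.

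For (c), I would compute $\calibfrac(w) - \calibfrac(u^{t_0})$ directly. By (C1), $\calibfrac(u^{t_0}) = \energyfrac(u^{t_0})$, so the $w$-independent quadratic terms cancel. Since $w = u^{t_0}$ on $\Omega^c$, the $\lambda$-integral is empty for $x \in \Omega^c$, which restricts the double integral to $x \in \Omega$, $y \in \R^n$. Swapping the order of integration in $(y, \lambda)$ by Fubini, the inner $y$-integral at fixed $(x, \lambda)$ becomes $\tfrac{1}{c_{n,s}}\fraclaplacian u^t(x)|_{t = t(x,\lambda)}$ (a well-defined principal value thanks to the $C^2$ regularity and $L^\infty$ bound on the field), while $F(w(x)) - F(u^{t_0}(x)) = \int_{u^{t_0}(x)}^{w(x)} F'(u^t(x))|_{t = t(x,\lambda)}\, d\lambda$ because $u^{t(x,\lambda)}(x) = \lambda$. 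Collecting these yields the master identity
\begin{equation*}
\calibfrac(w) - \calibfrac(u^{t_0}) = \int_{\Omega} \int_{u^{t_0}(x)}^{w(x)} \bigl[\fraclaplacian u^{t}(x) - F'(u^{t}(x))\bigr]\bigg|_{t = t(x,\lambda)}\, d\lambda\, dx.
\end{equation*}
Under the extremal hypothesis the bracket vanishes in $\Omega$ for every $t \in I$, so $\calibfrac(w) = \calibfrac(u^{t_0})$ for every admissible $w$ agreeing with $u^{t_0}$ on $\Omega^c$; subtracting two such identities gives (C3).

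For (b), the master identity still applies. On $\{w > u^{t_0}\}$ one has $\lambda \geq u^{t_0}(x)$ and thus $t(x,\lambda) \geq t_0$, so the supersolution assumption makes the bracket nonnegative; on $\{w < u^{t_0}\}$ the ordered limits of integration reverse and $t(x,\lambda) \leq t_0$, and the subsolution assumption gives a nonpositive bracket which combines with the reversed limits to contribute nonnegatively. Thus $\calibfrac(w) \geq \calibfrac(u^{t_0})$, which is (C3$'$), and the inequality chain following Definition~\ref{def:calib} gives the minimality of $u^{t_0}$. The main obstacle throughout is the rigorous justification of the principal value, the symmetrization, the change of variables $\lambda = u^t(x)$, and the Fubini swap between $y$ and $\lambda$: each becomes legitimate after one notes that symmetrization cancels the leading singular contribution $\nabla u^t(x)\cdot(x-y)$, leaving an integrand absolutely integrable for every $s \in (0,1)$ by the assumed $C^2$ regularity and uniform bounds on the field.
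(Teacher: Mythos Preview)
Your proposal is correct and follows essentially the same route as the paper: the master identity for (b) and (c) is exactly the paper's formula~\eqref{frac:calib:pot}, and your pointwise $\Psi$-inequality for (C2) compresses Lemmas~\ref{lemma:equality}--\ref{lemma:inequality} into a single calculus argument (the paper first derives the intermediate representation of Lemma~\ref{lemma:equality} and then applies the monotonicity bound~\eqref{eq:ex:8}, whereas you go straight to $\Psi\ge 0$ via $\partial_{t_1}\Psi$; the underlying manipulation---symmetrize in $(x,y)$, change $\lambda=u^t(x)$, use $\partial_t u^t>0$---is identical). The one point to correct is your closing claim that symmetrization makes the integrand absolutely integrable: after symmetrizing and splitting as in~\eqref{eq:ex:5} the residual piece $\int_{t_1}^{t_2}(u^t(x)-u^t(y))\,\partial_t u^t(y)\,dt$ is only $O\big(|x-y|\cdot|t_1-t_2|\big)$, and since $w$ is merely $C^0$ the factor $|t_1-t_2|$ carries no modulus in $|x-y|$, so absolute integrability against $|x-y|^{-n-2s}$ fails for $s\ge 1/2$; the paper handles this cleanly by working throughout with the truncated kernel $K_\varepsilon(z)=c_{n,s}|z|^{-n-2s}\mathds{1}_{\{|z|>\varepsilon\}}$ and passing to the limit $\varepsilon\downarrow 0$ only at the very end.
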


The meaning of the principal value $\PV$ in the definition of the functional $\calibfrac$  will be made precise in Section~\ref{section:fractional:laplacian}; see Remark~\ref{remark:pv}.

Even when the nonlocal energy functional is as simple as $\energyfrac$ (the energy functional associated to the fractional Laplacian) the form of a calibration, if any could exist, was not known prior to the present work. 

Our first attempts at constructing a calibration for $\energyfrac$ consisted on trying to ``nonlocalize'' the expression~\eqref{calib:quadratic} for the local calibration, mainly by substituting gradients by fractional ones or double integrals of differences.
This strategy seems to lead to functionals that are not calibrations.
We comment on these attempts with more detail in Appendix~\ref{section:examples}.

A second failed approach consisted of trying to find a satisfactory calibration using the extension problem for the fractional Laplacian.
Indeed, applying the local theory in the extended space gives a calibration in terms of a certain field of extremals ``upstairs'', but it was not clear at all how to write it in terms of the given field ``downstairs'' (the reason being that the functional is too involved).
Thus, the extension has not been useful to us; see Appendix~\ref{section:extension}.

We were puzzled for a long time until we revisited the work of the first author~\cite{Cabre-Calibration}, which found a calibration for the fractional perimeter.
It was written in terms of the Euler-Lagrange and Neumann operators associated to the fractional perimeter.
We then realized that such a structure was also present, but hidden, in the classical local calibration $\calibloc$ in \eqref{Intro_calib:loc}.
More precisely, for every $t_0 \in I$, in Theorem~\ref{alt:expression:local} we will see that
\begin{equation}\label{Intro_calib:loc:2}
	\begin{split}
		\calibloc(w) &= \int_{\Omega} \int_{u^{t_0}(x)}^{w(x)} \oploc(u^{t})(x)\Big|_{t = t(x, \lambda)} \d \lambda \d x  \\
		&\quad \quad \quad \quad  + \int_{\partial \Omega} \int_{u^{t_0}(x)}^{w(x)} \neumannloc(u^{t})(x)\big|_{t = t(x,\lambda)} \d \lambda \d \mathcal{H}^{n-1}(x) + \energyloc(u^{t_0}),
	\end{split}
\end{equation}
where $\oploc$ and $\neumannloc$ are, respectively, the Euler-Lagrange and Neumann operators associated to the functional $\energyloc$ in \eqref{Intro_loc:energy}.
To the best of our knowledge, this is the first time that the local calibration has been written in this way.
From this expression, the null-Lagrangian property follows easily.\footnote{Indeed, if $\{u^t\}_{t \in I}$ is a field of extremals, then $\oploc(u^{t})\equiv 0$ and the calibration depends only on the value of $w$ on $\partial \Omega$.}
Instead, traditionally one exhibited the null-Lagrangian property of $\calibloc$ by either expressing the functional as the flux of a divergence-free vector field in $\Omega \times \R$ or by certain straightforward although opaque analytic computations; see Section~\ref{section:local}.
Neither of these approaches reveals the exact role played by the Euler-Lagrange and Neumann operators in the calibration.

Once we found \eqref{Intro_calib:loc:2} for the local case, a simple extension of this expression easily led us to the nonlocal calibration of Theorem~\ref{thm:frac:calibration}.
The key point is that each of the terms in \eqref{Intro_calib:loc:2} has a clear nonlocal counterpart.
In fact, the same procedure works for general nonlocal functionals
$\energy$ of the form 
\begin{equation*}
	\energy(w) = \dfrac{1}{2}\iint_{\domain} \lag(x, y, w(x), w(y)) \d x \d y
\end{equation*}
with Lagrangian $\lag(x, y, a, b)$
satisfying the natural ellipticity condition \eqref{ellipticity}.

\subsection{An application to monotone solutions}
\label{subsection:applications}
Our main motivation to find a calibration came from the study of \emph{monotone solutions} to the fractional Allen-Cahn equation
\[
	\fraclaplacian u = u - u^3 \quad \text{ in } \R^{n}
\]
(see~\cite{CabreSireII,CabreSolaMorales}, for instance, and~\cite{CozziPassalacqua} for more general integro-differential operators).
Note that when the operator is the classical Laplacian, these solutions are related to a famous conjecture of De Giorgi; see~\cite{CabrePoggesi} for instance.

The following is an application of our main theorem to monotone solutions of translation invariant equations.

\begin{corollary}
	\label{cor:applications}
	Given $s\in (0,1)$ and 
	$F \in C^3(\R)$,
	let $u:\R^n\to\R$ be a bounded solution of
	\begin{equation}
		\label{eq:allen:cahn}
		\fraclaplacian u = F'(u) \quad \text{ in } \R^{n}. 
	\end{equation}
	Assume that $u$ is increasing in $x_n$.
	
	Then, for each bounded domain $\Omega \subset \R^{n}$, $u$ is a minimizer of $\energyfrac$ among 
	continuous functions $w:\R^n\to\R$ satisfying
	\begin{equation}\label{limit:condition}
		\lim_{\tau \to -\infty} u(x', \tau) \leq w(x', x_n) \leq \lim_{\tau \to +\infty} u(x', \tau) \quad \text{ for all } (x', x_n) \in \R^{n-1}\times \R
	\end{equation}
	and such that $w \equiv u$ in $\Omega^c$.
\end{corollary}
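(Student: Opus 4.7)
The plan is to construct a field of extremals by vertical translations of $u$ and invoke Theorem~\ref{thm:frac:calibration}(c). Set $I = \R$ and $u^{t}(x) := u(x', x_n + t)$. Translation invariance of~\eqref{eq:allen:cahn} gives that each $u^t$ is a bounded solution of the same equation, and the standard interior regularity theory for the fractional Laplacian (with $F \in C^3$ and $u \in L^\infty$, bootstrapping from $C^\alpha$) yields $u \in C^{2,\alpha}(\R^n)$ with global bounds depending only on $\|u\|_{L^\infty}$ and $F$. Hence $(x,t) \mapsto u^t(x)$ is $C^2$ on $\R^n \times \R$ with $|u^t(x)|$ and $|\partial_t u^t(x)|$ uniformly bounded, and the hypothesis $\partial_{x_n} u > 0$ makes $\{u^t\}_{t \in \R}$ a $C^2$ field of extremals in $\R^n$ in the sense of Definition~\ref{def:foliation}. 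Taking $t_0 = 0$ in Theorem~\ref{thm:frac:calibration}(c) then gives the minimality of $u = u^{t_0}$ among functions $w \in \admissiblefrac$ with $w \equiv u$ on $\Omega^c$, i.e., among continuous bounded competitors whose graph lies strictly inside $\regionfrac = \{(x, \lambda) : u^-(x) < \lambda < u^+(x)\}$, where $u^\pm(x) := \lim_{t \to \pm\infty} u^t(x)$.

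It remains to extend this conclusion to the class of $w$'s described by~\eqref{limit:condition}, which merely satisfy $u^- \leq w \leq u^+$ and may attain the limit leaves. Given such $w$, we truncate: for each $k \in \N$,
\[
w_k(x) := \max\bigl\{u^{-k}(x),\, \min\{u^k(x),\, w(x)\}\bigr\}.
\]
Then $w_k \in C^0(\R^n) \cap L^\infty(\R^n)$ and $u^{-k} \leq w_k \leq u^k$, so $\operatorname{graph} w_k \subset \regionfrac$ strictly; moreover, since $u^{-k} \leq u \leq u^k$ by monotonicity, $w_k \equiv u$ on $\Omega^c$. Thus $w_k \in \admissiblefrac$ with the correct exterior data, and the previous step yields $\energyfrac(u) \leq \energyfrac(w_k)$.

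The final step is to show $\energyfrac(w_k) \to \energyfrac(w)$; we may assume $\energyfrac(w) < \infty$, else minimality is vacuous. Pointwise convergence $w_k \to w$ together with uniform boundedness of $\|w_k\|_{L^\infty}$ handles the $F$-term by dominated convergence. For the Gagliardo seminorm, the $1$-Lipschitz property of $\min$ and $\max$ applied to an $x$-dependent clipping gives
\[
|w_k(x) - w_k(y)| \leq |w(x) - w(y)| + |u^k(x) - u^k(y)| + |u^{-k}(x) - u^{-k}(y)|,
\]
and the uniform $C^{1}(\R^n)$ bounds on $u^{\pm k}$ (together with $\energyfrac(w) < \infty$) supply a $k$-independent integrable dominant on $\domain$; dominated convergence then completes the argument. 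I expect the main subtlety to be precisely this boundary-of-foliation approximation, since the competitors allowed by~\eqref{limit:condition} may touch the limit leaves and are therefore not directly admissible in Theorem~\ref{thm:frac:calibration}; all other ingredients are immediate from that theorem and the regularity theory for $\fraclaplacian$.
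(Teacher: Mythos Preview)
Your argument is correct, and the core of it---building the field $u^t(x)=u(x',x_n+t)$, checking the hypotheses of Theorem~\ref{thm:frac:calibration} via regularity theory, and applying part~(c) with $t_0=0$---is exactly what the paper does.

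The only genuine difference lies in the approximation step used to pass from strict competitors (graph inside $\regionfrac$) to competitors satisfying merely~\eqref{limit:condition}. You truncate at the leaves $u^{\pm k}$ and invoke dominated convergence, which works but requires the Lipschitz estimate for clipping with $x$-dependent thresholds and a check that the resulting dominant is $k$-independent (it is, since all $u^{\pm k}$ share the Lipschitz and $L^\infty$ bounds of $u$). The paper instead uses the convex interpolation $w_\theta:=(1-\theta)u+\theta w$ for $\theta\in(0,1)$: strict monotonicity of $u$ forces $u^-<w_\theta<u^+$, and $w_\theta\equiv u$ on $\Omega^c$. The advantage of this route is that the convergence $\energyfrac(w_\theta)\to\energyfrac(w)$ as $\theta\to 1^-$ is immediate, since the Gagliardo part of $\energyfrac$ is a quadratic form (so $\theta\mapsto\gagliardo(w_\theta)$ is a quadratic polynomial once $\gagliardo(w)<\infty$) and the potential term converges by dominated convergence. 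Your truncation stays within the leaf structure, which is conceptually natural, but the interpolation trick is shorter and avoids the dominant-building altogether.
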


Let us mention that we only assume $F \in C^3$
for simplicity, to ensure that $u$ is of class $C^2$ independently of $s$.
We could weaken the regularity assumptions, but this is not the purpose of the article.

To prove Corollary~\ref{cor:applications}, we define the one-parameter family of functions $u^{t}(x) := u(x', x_n + t)$, where $x = (x', x_n) \in \R^{n-1}\times \R$.
Thanks to the monotonicity assumption, $u_{x_n}>0$, the family $\{u^t\}_{t \in \R}$ is a field in $\R^n$ in the sense of Definition~\ref{def:foliation}.
Moreover, it is a field of extremals on account of the translation invariance of the equation~\eqref{eq:allen:cahn}.
Thus, Theorem~\ref{thm:frac:calibration} gives that, on each bounded domain $\Omega \subset \R^{n}$, $u$ is a minimizer of the energy $\energyfrac$ among the admissible functions in the statement.

Let us point out that this minimality result was already known and can be proven without using calibrations.
Nevertheless, this alternative proof (described in the next paragraph) requires an existence and regularity theorem for minimizers.
Such a result is not available for many other nonlocal equations.
Our forthcoming work~\cite{CabreErnetaFelipe-Calibration2} will provide calibrations in a general nonlocal setting, and thus the proof above will allow us to show minimality of monotone solutions for the first time.

Now, we briefly discuss the proof of Corollary~\ref{cor:applications} which does not use calibrations; for the details, see~\cite{CabrePoggesi}.
One considers a minimizer within the region given by~\eqref{limit:condition} and with the same exterior datum as the monotone solution. 
Its existence and regularity can be proved in the case of equation~\eqref{eq:allen:cahn}.
Now, the monotone solution can be translated, starting from infinity, until it touches the minimizer from one side, something that must happen in $\Omega$, not in the exterior.
The strong comparison principle then yields that the translated solution and the minimizer must coincide. 
Moreover, by the exterior condition they must be equal to the original solution.
In particular, this proves that the monotone solution is a minimizer.
Furthermore, this proof gives uniqueness of solution with the exterior data of $u$.

\subsection{Outline of the article}

For the proofs of our main theorems, the reader may skip sections \ref{section:nonlocal:perimeter} and \ref{section:local}.

In Section~\ref{section:nonlocal:perimeter} we briefly comment on the classical perimeter functional
and review the work of the first author on the calibration for the nonlocal perimeter~\cite{Cabre-Calibration}.
Section~\ref{section:local} is devoted to recalling some known facts from the classical theory of calibrations and proving the new expression~\eqref{Intro_calib:loc:2}; see Theorem~\ref{alt:expression:local}.
In Section~\ref{section:fractional:laplacian} we prove Theorem~\ref{thm:frac:calibration} under weaker assumptions on the field.

In Appendix~\ref{section:extension} we apply the local theory to the Caffarelli-Silvestre extension to give a calibration for the fractional Laplacian in the extended space.
Finally, Appendix~\ref{section:examples} enumerates other natural candidates for a calibration associated to the fractional Laplacian; some of them are shown not to be calibrations.

\section{The classical and nonlocal perimeters}
\label{section:nonlocal:perimeter}

In this section we recall different notions for the perimeter of a set.
First we introduce the classical perimeter functional and its calibration.
We will mention several results concerning fields of extremals in this setting.
Later we revisit the work of the first author~\cite{Cabre-Calibration} on the construction of a calibration for the nonlocal perimeter. 
Here we will focus on identifying the key feature that leads to the calibration properties in this nonlocal framework.
This will suggest a candidate structure to search for in local functionals, which will lead to \eqref{Intro_calib:loc:2} and then allow us to treat the fractional Laplacian case.

As mentioned in the Introduction, some relevant applications of calibrations concern the theory of minimal surfaces.
In broad terms, a minimal surface $\Sigma \subset \R^n$ is a critical point of the $(n-1)$-dimensional area functional.
Given a domain $\Omega \subset \R^{n}$, the classical perimeter of a (regular) set $F \subset \R^{n}$
inside $\Omega$ is defined by
\begin{equation}
\label{perim}
\perimloc(F) := \mathcal{H}^{n-1}(\Omega \cap \partial F),
\end{equation}
where $\mathcal{H}^{n-1}$ is the $(n-1)$-dimensional Hausdorff measure.
Here we interpret the boundary of $F$ as the surface $\Sigma = \partial F$.
The critical points $E$ of $\perimloc$ (known as \emph{minimal sets} in the literature) satisfy $H_{\rm L}[\partial E] = 0$ in $\Omega$, where $H_{\rm L}[\Sigma]$ denotes the mean curvature of~$\Sigma$.
The variation of $\perimloc$ is taken with respect to perturbations preserving the boundary datum $E \cap \partial \Omega$.

We are interested in showing that certain minimal sets $E$ minimize $\perimloc$ among sets~$F$ with the same boundary condition $F \cap \partial \Omega = E \cap \partial \Omega$.
Assume that for a family of minimal sets $\{E^t\}_{t \in \R}$, the surfaces $\partial E^t$ form a foliation of $\Omega$.
For $x \in \Omega$, we let $t(x)$ be the unique $t\in \R$ such that $x \in \partial E^t$.
Denote the outward unit normal vector to $\partial F$ by~$\nu_{\partial F}$.
Then, the perimeter functional admits as a calibration
\begin{equation}
\label{class:perim}
\calibperloc(F) 
:= \int_{\Omega \cap \partial F} \! X \cdot \nu_{\partial F} \d \mathcal{H}^{n-1}, 
\end{equation}
where
\[
X(x) := \nu_{\partial E^{t}}\big|_{t = t(x)}(x)
\]
is the vector field given by the normal vectors to the surfaces $\partial E^t$.
Notice that Definition~\ref{def:calib} can be easily modified to involve subsets of $\R^n$ instead of functions.
Then, properties \ref{def:calib:2} and \ref{def:calib:3} are easy to check directly, while the null-Lagrangian property \ref{def:calib:1} follows from the divergence theorem and the fact that $\div X = 0$.
As a consequence, each $E^t$ minimizes $\perimloc$ and, therefore, each $\partial E^t$ is a minimizing minimal surface.

This discussion leads to the question of when it is possible to embed a minimal surface in a field of extremals.
The simplest situation is when the minimal surface is a graph.
If $u \colon \Omega' \subset \R^{n-1} \to \R$ is a minimal graph, then the graphs of the translations $u^t = u + t$ give an extremal field in $\Omega = \Omega' \times \R$.
By the calibration $\calibperloc$ in \eqref{class:perim}, every minimal graph is a minimizing minimal surface.\footnote{Notice that restricting the area functional to the class of graphs yields a convex functional.
In particular, every minimal graph minimizes area in this smaller class, but it is not a priori clear if they are minimizers with respect to all surfaces. The calibration is used to prove this stronger fact.}
We point out that the functional $\calibperloc$ can also be obtained by integrating a closed differential form; see Chapter $1$ in \cite{ColdingMinicozzi}.

Another interesting situation is when the minimal surface is not a graph but has an explicit expression.
Here an extremal field can still be obtained in some cases.
For instance, this is done for the Simons cone and for the more general Lawson cones in Bombieri, De Giorgi, and Giusti \cite{BombieriDeGiorgiGiusti} and in Davini \cite{Davini}.
The strategy here consists of using the symmetries of the cone to reduce the minimal surface equation to an ODE in the plane.
It is then shown that the solutions of this ODE do not intersect each other, and thus give a foliation.
We remark that, although the cone is an explicit extremal, the extremal field itself is not explicit.
An alternative is to build fields made of sub and supersolutions, which are easier to obtain and suffice to show the minimality (see footnote~\ref{footnote:sub:super}).
Explicit examples of such fields have been found for Lawson cones, simplifying the proof of the minimality; see De Philippis and Paolini~\cite{DePhilippisPaolini} for the Simons cone and Liu~\cite{LiuLawson} for Lawson cones.
We mention that the case of minimal surfaces of codimension greater than $1$ can also be treated, where the appropriate notion of calibration involves the use of differential forms; see~\cite{Morgan-CalibrationSurvey}.

The perimeter functional $\perimloc$ in \eqref{perim} has a nonlocal analogue.
Given a nonnegative symmetric kernel $K = K(z)$, with $z \in \R^n$, the $K$-nonlocal perimeter of a set $F \subset \R^n$ inside $\Omega$ is defined by 
\[
\perim(F)
:= \dfrac{1}{2}\iint_{\domain} \abs{\mathds{1}_{F}(x)-\mathds{1}_{F}(y)}K(x-y)\d x \d y,
\]
where $Q(\Omega)$ was defined in \eqref{defQOmega}.
It is well known that the Euler-Lagrange operator associated to $\perim$ is the nonlocal mean curvature $H_K$, which is defined for $F$ at boundary points $x \in \partial F$ by
\[
H_K[F](x) := \int_{\R^{n}}  \big(\mathds{1}_{F^c}(y) - \mathds{1}_{F}(y)\big) K(x-y) \d y,
\]
as introduced in~\cite{CaffarelliRoquejoffreSavin}. 
In particular, if a (sufficiently regular) set $E$ minimizes $\perim$ with respect to sets~$F$ with the same exterior values $F\setminus \Omega = E \setminus \Omega$, then $H_K[E](x) = 0$ for~$x \in \partial E \cap \Omega.$

In~\cite{Cabre-Calibration}, the first author showed that, given a measurable function $\phi\colon \R^{n} \to \R$, the functional\footnote{Let us point out that the idea of using the sign function comes from the Legendre transform of the absolute value that appears in the fractional perimeter functional.}
\begin{equation}\label{perim:calib}
	\calibper(F) := \dfrac{1}{2}\iint_{\domain} \sign\big(\phi(x)-\phi(y)\big)\big(\mathds{1}_{F}(x)-\mathds{1}_{F}(y)\big)\,K(x-y)\d x \d y
\end{equation}
is a calibration for the nonlocal perimeter $\perim$ and each superlevel set 
\[
E^t := \{x \in \R^n \suchthat \phi(x) > t\},
\] 
assuming that these sets have zero nonlocal mean curvature.
As a consequence, each~$E^t$ is a minimizer of $\perim$ with respect to sets that coincide with $E^t$ outside $\Omega$.

Properties~\ref{def:calib:2} and~\ref{def:calib:3} are easy to check directly from expression~\eqref{perim:calib}.
However, showing the null-Lagrangian property~\ref{def:calib:1} requires an alternative expression for $\calibper$.
For this, \cite{Cabre-Calibration} wrote \eqref{perim:calib} in terms of the sets $E^t$ as follows. 
Assume for simplicity that $\phi$ is smooth and $\nabla \phi(x) \neq 0$ for all $x$.
Then the level sets are smooth surfaces
\[
\partial E^{t} = \{x \in \R^{n}\colon \phi(x) = t\},
\] 
which have zero Lebesgue measure in $\R^n$, and it can be readily checked that
\begin{equation}
\label{sign}
\sign\big(\phi(x)-\phi(y)\big) = 
\big(\mathds{1}_{(E^{t})^{c}}(y) - \mathds{1}_{E^{t}}(y)\big)\big|_{t = \phi(x)} \quad \text{for a.e. } (x, y) \in \R^{n}\times \R^{n}.
\end{equation}
By skew-symmetry of $\sign(\phi(x)-\phi(y))$,
using \eqref{sign} and splitting the integration domain into $\big(\Omega \times \R^{n}\big) \cup \big(\Omega^{c}\times \Omega\big)$,
we arrive at the alternative expression
\begin{equation}\label{perim:calib:alt}
	\begin{split}
		\calibper(F) &= \int_{\Omega \cap F} H_K[E^t](x)\big|_{t = \phi(x)} \d x 
		\\
		&\quad + \int_{F \setminus \Omega} \left\{\int_{\Omega} \big(\mathds{1}_{(E^{t})^{c}}(y) - \mathds{1}_{E^{t}}(y)\big) K(x-y) \d y \right\} \bigg|_{t = \phi(x)} \d x,
	\end{split}
\end{equation}
see~\cite{Cabre-Calibration} for details.
Thus, if for all $t$ we have $H_K[E^t] = 0$ in $\Omega$, then the quantity $\calibper(F)$ depends only on the exterior condition $F \setminus \Omega$, which makes it to be a null-Lagrangian.\footnote{As mentioned in the Introduction, to show minimality one does not actually need the full null-Lagrangian property~\ref{def:calib:1} but rather the weaker condition~\ref{def:calib:1:prime}.
For instance, to prove that the set~$E^0$ minimizes $\perim,$ from \eqref{perim:calib:alt} it can be shown that it suffices for the $E^t$ ``above'' and ``below'' $E^0$ to be super and subsolutions, respectively.
For more details see~\cite{Cabre-Calibration} and compare with Theorem~\ref{thm:frac:calibration} and Proposition~\ref{property:1:loc} in the following sections.
}

Passing from~\eqref{perim:calib} to~\eqref{perim:calib:alt} is the crucial step in~\cite{Cabre-Calibration}. 
To our knowledge, the structure of the alternative expression~\eqref{perim:calib:alt} for $\calibper$ is the only way to prove the null-Lagrangian property.
The two terms in \eqref{perim:calib:alt} bring out the true dependence of $\calibper$ on the data:\footnote{The structure in~\eqref{perim:calib:alt} also appears in the local framework, where the calibration $\calibperloc$ given by \eqref{class:perim} can be written as
\[
\calibperloc(F) = \int_{\Omega\cap F} \! H_{\rm L}[\partial E^t](x)|_{t = t(x)} \d x -\int_{\partial \Omega \cap F} \! X \cdot \nu_{\partial \Omega} \d \mathcal{H}^{n-1}.
\]
Moreover, it is not difficult to see that the calibration for the \emph{fractional perimeter}, i.e., the $K$-nonlocal perimeter with $K(z) = |z|^{-n-2s}$ for $s \in (0,1)$, recovers $\calibperloc$ in the limit when $s \to 1.$}
the first one involves the Euler-Lagrange equation of $\perim$ at each superlevel set $E^t$, while the second one depends only on the set $F$ outside $\Omega$. 
The existence of such a structure for the nonlocal perimeter suggested that it could also be present, although hidden, in other calibrations, even in the local case, as we will see in next section.


\section{The theory of calibrations for local equations, and a novelty}
\label{section:local}

The purpose of this section is twofold:
first, to review the classical theory of fields of extremals and calibrations for ``local'' functionals 
and, second, to give a new proof of the calibration properties in this setting.
Inspired by the structure of the calibration~\eqref{perim:calib:alt} for the nonlocal perimeter,
we will find an alternative expression for the classical calibration \eqref{calib:loc:2} (a new expression to the best of our knowledge), which involves only the Euler-Lagrange and Neumann operators acting on the field.

Consider an energy functional of the form
\begin{equation} \label{def:energy:loc}
	\energyloc(w) := \int_{\Omega} \lagloc \big(x, w(x), \nabla w(x)\big) \d x,
\end{equation}
where the Lagrangian $\lagloc(x, \lambda, q)$ is of class $C^{2}$ in all arguments.

A function that plays an important role when studying minimality to nonconvex energy functionals of the form~\eqref{def:energy:loc} is the so-called \emph{Weierstrass excess function}. 
It is defined 
for $x \in \Omega$, $\lambda \in\R$, and $q$, $\widetilde{q} \in \R^n$
by
\begin{equation}\label{excess:function}
	\excess(x, \lambda, q, \widetilde{q}) := \lagloc(x, \lambda, \widetilde{q}) - \lagloc(x, \lambda, q) - \partial_{q}\lagloc(x, \lambda, q)\cdot(\widetilde{q}-q).
\end{equation}
It is well known (see \cite{GiaquintaHildebrandt}) that if $u \in C^1(\overline{\Omega})$ is a minimizer of $\energyloc$ with respect to small $C^0_c(\Omega)$ perturbations,\footnote{This type of local minimizers are often referred to as \emph{strong minimizers} in the literature.} then it must satisfy the \emph{Weierstrass necessary condition}
\begin{equation}\label{weierstrass:nec}
	\excess(x, u(x), \nabla u(x), \xi) \geq 0 \quad \text{ for all } x \in \Omega, \, \xi \in \R^n.
\end{equation}
Note that condition~\eqref{weierstrass:nec} on the excess function is automatically satisfied by every function $u$ whenever $\lagloc(x, \lambda, q)$ is convex with respect to the variable $q$, i.e., when the problem is elliptic.
The Dirichlet energy and more generally the Lagrangian associated to the $p$-Laplacian are important elliptic examples where \eqref{weierstrass:nec} is thus automatically satisfied.

Given an interval $I \subset \R$ and $\{u^{t}\}_{t \in I}$ a $C^2$ field in $\Omega$ (in the sense of Definition~\ref{def:foliation}),
we let 
\[
\regionloc := \big\{(x, \lambda)\in \overline{\Omega} \times \R \suchthat \lambda = u^{t}(x) \text{ for some } t \in I \big\}
\]
and consider the set of admissible functions
\[
\admissibleloc := \big\{w \in C^{1}(\overline{\Omega})\suchthat {\rm graph }\, w \subset \regionloc\big\}.
\]
In the classical theory, one employs the Legendre transform of $G_L$ to define the functional 
$\calibloc \colon \admissibleloc \to \R$ by
\begin{equation}\label{calib:loc}
	\begin{split}
		\calibloc(w) := & \int_{\Omega} \Big\{ \innerprod{\partial_q \lagloc(x, u^{t}(x), \nabla u^{t}(x))}{\big(\nabla w(x) -\nabla u^{t}(x)\big)} \Big\} \Big|_{t = t(x, w(x))} \d x\\
		& \hspace{1.5cm} + \int_{\Omega} \lagloc(x, u^{t}(x), \nabla u^{t}(x))\big|_{t = t(x, w(x))} \d x.
	\end{split}
\end{equation}

Under the assumption that $\{u^t\}_{t \in I}$ is a field of extremals and every leaf $u^t$ satisfies the Weierstrass necessary condition~\eqref{weierstrass:nec}, it is well known that $\calibloc$ is a calibration for the functional $\energyloc$ and each $u^t$;\footnote{The positivity of the excess function $\excess$ for every leaf $u^t$ is only required to show property~\ref{def:calib:3}. Properties~\ref{def:calib:2} and \ref{def:calib:1} follow directly from the existence of the extremal field.} see~\cite{AlbertiAmbrosioCabre, AngrisaniAscioneLeoneMantegazza, GiaquintaHildebrandt}.
As an illustrative example, the $p$-Dirichlet energy
\[ 
\energyp(w) = \dfrac{1}{p}\int_{\Omega} \abs{\nabla w(x)}^{p} \d x 
\]
admits the calibration 
\[
\calibp(w) = \int_{\Omega} \Big\{\abs{\nabla u^t(x)}^{p-2} \nabla u^t(x) \cdot \big(\nabla w(x) - \nabla u^{t}(x)\big) + \dfrac{1}{p}\abs{\nabla u^t(x)}^{p}\Big\}\Big|_{t = t(x, w(x))} \d x.
\]

We will give a new proof that the functional $\calibloc$ is a calibration.
As mentioned before, the key point in our approach is to rewrite $\calibloc$
in an alternative form involving only those operators which are of interest to the theory of PDE: the Euler-Lagrange and Neumann operators.
These arise when computing the 
first variation of $\energyloc$ at $u\in C^{2}(\overline{\Omega})$ in a direction of $\eta \in C^{\infty}(\overline{\Omega})$, that is,
\begin{equation}
\begin{split}
\label{firstvar}
\dfrac{\d}{\d \varepsilon} \energyloc(w + \varepsilon \eta)\Big|_{\varepsilon = 0} = \int_{\Omega} \oploc (w)(x) \, \eta(x) \d x  + \int_{\partial\Omega} \neumannloc(w)(x) \, \eta(x)\d \mathcal{H}^{n-1}(x).
\end{split}
\end{equation}
Here in \eqref{firstvar}, $\oploc$ denotes the Euler-Lagrange operator
\begin{equation}
\label{loc:operator}
	\oploc (w)(x) := - \div\big(\partial_q \lagloc\big(x, w(x), \nabla w(x)\big)\big) + \partial_{\lambda} \lagloc\big(x, w(x), \nabla w(x)\big)
\end{equation}
and $\neumannloc$ denotes the Neumann operator
\begin{equation}
\label{loc:neumann}
	\neumannloc (w)(x) := \partial_q \lagloc\big(x, w(x), \nabla w(x)\big)\cdot \nu_{\partial \Omega}(x),
\end{equation}
where
$\nu_{\partial \Omega}$ is the outward unit normal vector to $\partial \Omega$.

The following identity is our new result.
\begin{theorem}
\label{alt:expression:local}
	Given an interval $I \subset \R$ and a bounded domain $\Omega \subset \R^n$,
	let  $\{u^t\}_{t \in I}$ be a $C^2$ field in $\Omega$ in the sense of Definition \ref{def:foliation}.
Let $\lagloc = \lagloc(x, \lambda, q)$ be a $C^2$ function.
	
Then, for any $t_0 \in I$, the functional $\calibloc$ defined in \eqref{calib:loc} can be written as
\begin{equation}\label{calib:loc:2}
	\begin{split}
		\calibloc(w) &= \int_{\Omega} \int_{u^{t_0}(x)}^{w(x)} \oploc(u^{t})(x)\Big|_{t = t(x, \lambda)} \d \lambda \d x  \\
		&\quad \quad \quad \quad  + \int_{\partial \Omega} \int_{u^{t_0}(x)}^{w(x)} \neumannloc(u^{t})(x)\big|_{t = t(x,\lambda)} \d \lambda \d \mathcal{H}^{n-1}(x) + \energyloc(u^{t_0}).
	\end{split}
\end{equation}
\end{theorem}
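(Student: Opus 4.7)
My plan is to reduce both sides of~\eqref{calib:loc:2} to a single common expression by means of a change of variables induced by the field, one pointwise chain-rule identity, and a single application of the divergence theorem. First I would introduce the \emph{leaf-label function} $\widetilde t(x) := t(x, w(x))$, which is $C^1$ on $\overline{\Omega}$ since $w$ and the field are $C^1$ and $\partial_t u^t > 0$. Differentiating the identity $w(x) = u^{\widetilde t(x)}(x)$ in $x$ yields the key simplification
\begin{equation*}
\nabla w(x) - \nabla u^{\widetilde t(x)}(x) = \partial_t u^{\widetilde t(x)}(x)\,\nabla \widetilde t(x),
\end{equation*}
so that the definition~\eqref{calib:loc} of $\calibloc(w)$ becomes
\begin{equation*}
\calibloc(w) = \int_\Omega \Psi(x, \widetilde t(x)) \cdot \nabla \widetilde t(x) \d x + \int_\Omega \lagloc\big(x, u^{\widetilde t(x)}(x), \nabla u^{\widetilde t(x)}(x)\big) \d x,
\end{equation*}
using the shorthand $\Psi(x, t) := \partial_q \lagloc(x, u^t(x), \nabla u^t(x))\,\partial_t u^t(x)$.

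Next, on the right-hand side of~\eqref{calib:loc:2} I would change variables $\lambda = u^s(x)$ in each inner integral (legitimate because $s \mapsto u^s(x)$ is $C^1$ and strictly increasing), turning $\d\lambda$ into $\partial_s u^s(x) \d s$ and the limits $u^{t_0}(x)$ and $w(x)$ into $t_0$ and $\widetilde t(x)$. The crucial tool is then the pointwise identity
\begin{equation*}
\frac{\partial}{\partial t} \lagloc\big(x, u^t(x), \nabla u^t(x)\big) = \oploc(u^t)(x)\,\partial_t u^t(x) + \div_x \Psi(x, t),
\end{equation*}
which I would obtain by differentiating $\lagloc$ in $t$ via the chain rule and then adding and subtracting $\div_x(\partial_q \lagloc)\,\partial_t u^t$, so as to recognize the combination $\partial_\lambda \lagloc - \div_x(\partial_q \lagloc)$ as the Euler--Lagrange operator $\oploc(u^t)$ from~\eqref{loc:operator}. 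Integrating this identity in $s \in [t_0, \widetilde t(x)]$ and combining with the $\energyloc(u^{t_0})$ summand produces $\int_\Omega \lagloc(x, u^{\widetilde t}, \nabla u^{\widetilde t}) \d x$, minus the bulk correction $\int_\Omega \int_{t_0}^{\widetilde t(x)} \div_x \Psi(x, s) \d s \d x$, plus the boundary term involving $\neumannloc$.

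Finally, to handle the residual pieces I would apply the divergence theorem to the vector field $V(x) := \int_{t_0}^{\widetilde t(x)} \Psi(x, s) \d s$. Leibniz's rule gives
\begin{equation*}
\div V(x) = \int_{t_0}^{\widetilde t(x)} \div_x \Psi(x, s) \d s + \Psi(x, \widetilde t(x)) \cdot \nabla \widetilde t(x),
\end{equation*}
while on $\partial \Omega$ the normal component $V \cdot \nu_{\partial \Omega}$ coincides with the Neumann integrand $\int_{t_0}^{\widetilde t(x)} \neumannloc(u^s)(x)\,\partial_s u^s(x) \d s$ by~\eqref{loc:neumann}. Inserting this into the previous step shows that the bulk divergence correction and the Neumann boundary integral collapse to precisely $\int_\Omega \Psi(x, \widetilde t(x)) \cdot \nabla \widetilde t(x) \d x$, which matches the remaining term in the formula for $\calibloc(w)$ derived in the first paragraph.

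The main subtlety is the bookkeeping around the variable upper limit $\widetilde t(x)$: without the Leibniz contribution $\Psi(x, \widetilde t(x)) \cdot \nabla \widetilde t(x)$ produced by differentiating the upper limit, the bulk and boundary parts of~\eqref{calib:loc:2} would not recombine into the classical Legendre-type expression~\eqref{calib:loc}. Regularity is not an issue since the $C^2$ hypotheses on both the field and $\lagloc$ justify every chain-rule, Fubini, and divergence-theorem step used above.
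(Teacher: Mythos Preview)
Your argument is correct and in fact somewhat cleaner than the paper's main proof. The paper proceeds via a homotopy: it sets $w_\theta=(1-\theta)u^{t_0}+\theta w$, writes $\calibloc(w)-\calibloc(u^{t_0})=\int_0^1\frac{\d}{\d\theta}\calibloc(w_\theta)\,\d\theta$, computes this $\theta$-derivative by chain rule, integrates by parts in $x$, invokes the identity $\nabla[t(x,w_\theta(x))]=(\nabla w_\theta-\nabla u^t)\,\partial_\lambda t$ (exactly your first displayed formula, rewritten), and finally changes variables $\theta\mapsto\lambda=w_\theta(x)$. You bypass the artificial parameter $\theta$ entirely: after the same chain-rule identity you use the field parameter $t$ directly, and your pointwise relation $\partial_t\lagloc(x,u^t,\nabla u^t)=\oploc(u^t)\,\partial_t u^t+\div_x\Psi$ isolates the Euler--Lagrange operator in one line, with the divergence theorem applied once to $V(x)=\int_{t_0}^{\widetilde t(x)}\Psi(x,s)\,\d s$. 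The paper also records (in a remark following its proof) a second argument via the divergence theorem in $\R^{n+1}$ applied to a vector field $X(x,\lambda)$; your proof can be viewed as the $\R^n$ shadow of that argument, with $V$ being the vertical integral of the horizontal component of $X$. What you gain is directness and a transparent appearance of $\oploc$ and $\neumannloc$; what the homotopy approach in the paper buys is a formula for $\calibloc(w)-\calibloc(\widetilde w)$ valid for arbitrary pairs $w,\widetilde w$ (not only $\widetilde w=u^{t_0}$), though this extra generality is not actually used later.
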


The proof of the theorem follows a typical
strategy for showing the null-Lagrangian property, as seen for instance in~\cite{AlbertiAmbrosioCabre}.
However, a new trick will allow us to identify in the expression the operators $\oploc$ and $\neumannloc$ acting on the leaves.
This is the first time we have seen the calibration written this way.

\begin{proof}[Proof of Theorem~\ref{alt:expression:local}]
In order to prove the result it will be enough to show that
\begin{equation}\label{calib:loc:2'}
\begin{split}
	\calibloc(w) &- \calibloc(\widetilde{w}) = \int_{\Omega} \int_{\widetilde{w}(x)}^{w(x)} \oploc(u^{t})(x)\Big|_{t = t(x, \lambda)} \d \lambda \d x  \\
	&\quad \quad \quad \quad \quad \quad \quad \quad + \int_{\partial \Omega} \int_{\widetilde{w}(x)}^{w(x)} \neumannloc(u^{t})(x)\big|_{t = t(x,\lambda)} \d \lambda \d \mathcal{H}^{n-1}(x).
\end{split}
\end{equation}
for any given $w, \widetilde{w} \in \admissibleloc$. That is, we only need to take $\tilde{w}=u^{t_0}$ and use the easy equality $\calibloc(u^{t_0}) = \energyloc(u^{t_0})$.

First, let us briefly describe the proof of identity~\eqref{calib:loc:2'}. We consider $\calibloc$ acting on the convex combination $w_{\theta} := (1-\theta) \widetilde{w} + \theta w$
and express the left-hand side of \eqref{calib:loc:2'} as $\int_{0}^{1} \frac{\d}{\d\theta} \calibloc(w_{\theta}) \d \theta$.
While in the literature the functions $w$ and $\widetilde{w}$ are assumed to have the same boundary conditions, here we do not impose such a restriction.
Then, we compute the derivative in $\theta$ using the expression of $\calibloc(w_{\theta})$ as an integral in $x$ and integrating by parts.
Finally, after applying Fubini's theorem to interchange the order of integration,
the key point is to make the change of variables $\theta \mapsto w_{\theta}(x)$ for each $x$.
This yields the final expression.
	
Next, let us proceed with the proof. We let $\zeta := w - \widetilde{w}$ and hence $w_{\theta} = \widetilde{w} + \theta \zeta$. 
Since
\begin{equation}
\label{lel0}
\begin{split}
&\calibloc(w) - \calibloc(\widetilde{w}) = \int_{0}^{1}\dfrac{\d}{\d \theta} \calibloc(w_{\theta}) \d \theta
\end{split}	
\end{equation}
and
\begin{equation}
\label{lel1}
\begin{split}
\dfrac{\d}{\d \theta} \calibloc(w_{\theta}) &= \int_{\Omega} \dfrac{\d}{\d \theta}  \Big\{ \lagloc\big(x, u^{t}, \nabla u^{t}\big) \big|_{t = t(x, w_{\theta}(x))} \Big\} \d x\\
& \quad \quad \quad  + 
\int_{\Omega} \dfrac{\d}{\d \theta}\left( \Big\{ \innerprod{\partial_q \lagloc\big(x, u^{t}, \nabla u^{t}\big)}{\big(\nabla w_{\theta} -\nabla u^{t}\big)} \Big\}\Big|_{t = t(x, w_{\theta}(x))} \right) \d x,
\end{split}
\end{equation}
we must compute each of the integrands in \eqref{lel1}.

By the chain rule, the first integrand can be written as
\[
\begin{split}
&\dfrac{\d}{\d \theta} \Big\{ \lagloc\big(x, u^{t}, \nabla u^{t}\big)\big|_{t = t(x, w_{\theta}(x))}\Big\} \\
&\hspace{3cm} =  \partial_{\lambda}\lagloc\big(x, u^{t}, \nabla u^{t}\big)\big|_{t = t(x, w_{\theta}(x))} \frac{\d}{\d \theta} \big( u^{t(x, w_{\theta}(x))} \big) \\
& \hspace{4.5cm} + \innerprod{\partial_{q}\lagloc\big(x, u^{t}, \nabla u^{t}\big)\big|_{t = t(x, w_{\theta}(x))}}{\dfrac{\d}{\d \theta}\big(\nabla u^{t}|_{t = t(x, w_{\theta}(x))}\big)}\\
\end{split}
\]
and using that $\frac{\d}{\d \theta} \big( u^{t(x, w_{\theta}(x))} (x)\big) = \frac{\d}{\d \theta} w_{\theta}(x) = \zeta(x)$, we deduce
\begin{equation}
\label{lel2}
\begin{split}
&\dfrac{\d}{\d \theta} \Big\{ \lagloc\big(x, u^{t}, \nabla u^{t}\big)\big|_{t = t(x, w_{\theta}(x))}\Big\} \\
&\ \ \ =  \partial_{\lambda}\lagloc\big(x, u^{t}, \nabla u^{t}\big)\big|_{t = t(x, w_{\theta}(x))}\zeta + \innerprod{\partial_{q}\lagloc\big(x, u^{t}, \nabla u^{t}\big)\big|_{t = t(x, w_{\theta}(x))}}{\dfrac{\d}{\d \theta}\big(\nabla u^{t}|_{t = t(x, w_{\theta}(x))}\big)}.
\end{split}
\end{equation}
Similarly, the second integrand in \eqref{lel1} is
\begin{equation}
\label{lel3}
\begin{split}
&\dfrac{\d}{\d \theta}\left( \Big\{ \innerprod{\partial_q \lagloc\big(x, u^{t}, \nabla u^{t}\big)}{\big(\nabla w_{\theta} -\nabla u^{t}\big)} \Big\}\Big|_{t = t(x, w_{\theta}(x))} \right)\\
& \quad \quad \quad  =  \Big\{ \innerprod{\partial_{t}\big[\partial_q \lagloc\big(x, u^{t}, \nabla u^{t}\big)\big]}{\big(\nabla w_{\theta} - \nabla u^{t}\big)}  \Big\}\Big|_{t = t(x, w_{\theta}(x))} \dfrac{\d}{\d \theta}\big[ t(x, w_{\theta}(x))\big]\\
&\hspace{3.5cm} +\innerprod{\partial_{q}\lagloc\big(x, u^{t}, \nabla u^{t}\big)\big|_{t = t(x, w_{\theta}(x))}}{\frac{\d}{\d \theta}\nabla w_{\theta}} 
\\
&\hspace{3.5cm} 
- \innerprod{\partial_{q}\lagloc\big(x, u^{t}, \nabla u^{t}\big)\big|_{t = t(x, w_{\theta}(x))}}{\dfrac{\d}{\d \theta} \big(\nabla u^{t}|_{t = t(x, w_{\theta}(x))}\big)}\\
& \quad \quad \quad  =  \Big\{ \innerprod{\partial_{t}\big[\partial_q \lagloc\big(x, u^{t}, \nabla u^{t}\big)\big]}{\big(\nabla w_{\theta} - \nabla u^{t}\big)}  \Big\}\Big|_{t = t(x, w_{\theta}(x))} \partial_{\lambda} t(x, w_{\theta}(x)) \zeta \\
&\hspace{3.5cm} +\innerprod{\partial_{q}\lagloc\big(x, u^{t}, \nabla u^{t}\big)\big|_{t = t(x, w_{\theta}(x))} }{\nabla \zeta} \\
&\hspace{3.5cm} - \innerprod{\partial_{q}\lagloc\big(x, u^{t}, \nabla u^{t}\big)\big|_{t = t(x, w_{\theta}(x))}}{\dfrac{\d}{\d \theta} \big(\nabla u^{t}|_{t = t(x, w_{\theta}(x))}\big)}.
\end{split}
\end{equation}

Adding \eqref{lel2} and \eqref{lel3}, substituting in \eqref{lel1}, and rearranging terms, we see that
\begin{equation}\label{calib:diff:aux:1}
\begin{split}
&\dfrac{\d}{\d \theta} \calibloc(w_{\theta}) \\
&\ \ \ = \int_{\Omega} \partial_{\lambda}\lagloc\big(x, u^{t}, \nabla u^{t}\big)\big|_{t = t(x, w_{\theta}(x))}\zeta \d x +\int_{\Omega}\innerprod{\partial_{q}\lagloc\big(x, u^{t}, \nabla u^{t}\big)\big|_{t = t(x, w_{\theta}(x))}}{\nabla \zeta} \d x\\
&\ \ \ \quad \quad \quad + \int_{\Omega}  \Big\{ \innerprod{\partial_{t}\big[\partial_q \lagloc\big(x, u^{t}, \nabla u^{t}\big)\big]}{\big(\nabla w_{\theta} - \nabla u^{t}\big)}  \Big\}\Big|_{t = t(x, w_{\theta}(x))} \partial_{\lambda}t(x, w_{\theta}(x)) \zeta \d x.
\end{split}
\end{equation}
The second term in~\eqref{calib:diff:aux:1} can be integrated by parts as
\begin{equation}\label{calib:diff:aux:2}
\begin{split}
&\int_{\Omega}\innerprod{\partial_{q}\lagloc\big(x, u^{t}, \nabla u^{t}\big) \big|_{t = t(x, w_{\theta}(x))}}{\nabla \zeta} \d x \\
&\quad \quad  = \int_{\partial\Omega} \partial_{q}\lagloc\big(x, u^{t},\nabla u^{t}\big)\big|_{t = t(x, w_{\theta}(x))} \, \cdot \nu_{\partial\Omega} \ \zeta \d \mathcal{H}^{n-1} \\
& \quad \quad \quad \quad \quad -\int_{\Omega} \div\big(\partial_q \lagloc\big(x, u^{t}, \nabla u^{t}\big)\big)\big|_{t = t(x, w_{\theta}(x))} \zeta \d x\\
& \quad \quad \quad \quad \quad  - \int_{\Omega} \innerprod{\partial_{t}\big[\partial_q \lagloc\big(x, u^{t}, \nabla u^{t}\big)\big]\big|_{t = t(x, w_{\theta}(x))} }{\nabla \big[t(x, w_{\theta}(x))\big]} \zeta \d x.
\end{split}
\end{equation}

We now claim that
\begin{equation}
\label{miracle}
\nabla \big[t(x, w_{\theta}(x))\big] = \big(\nabla w_{\theta} - \nabla u^{t}\big) \Big|_{t = t(x, w_{\theta}(x))} \partial_{\lambda}t(x, w_{\theta}(x)),
\end{equation}
which leads to the identity we wish to prove.
Indeed, if \eqref{miracle} holds, then,
substituting \eqref{calib:diff:aux:2} in \eqref{calib:diff:aux:1}, we have
\begin{equation}
\label{miracle2}
\begin{split}
\dfrac{\d}{\d \theta} \calibloc(w_{\theta}) &= \int_{\Omega} \big\{\partial_{\lambda} \lagloc\big(x, u^{t}, \nabla u^{t}\big) - \div\big(\partial_q \lagloc\big(x, u^{t}, \nabla u^{t}\big)\big) \big\}\big|_{t = t(x, w_{\theta}(x))} \zeta \d x\\
& \quad  \quad \quad  \quad  + \int_{\partial\Omega} \partial_{q}\lagloc\big(x, u^{t},\nabla u^{t}\big)\big|_{t = t(x, w_{\theta}(x))}\,\cdot\nu_{\partial\Omega}\ \zeta \d \mathcal{H}^{n-1}\\
& = \int_{\Omega}  \oploc(u^{t})\big|_{t = t(x, w_{\theta}(x))} \zeta \d x + \int_{\partial\Omega} \neumannloc(u^{t})\big|_{t = t(x, w_{\theta}(x))} \zeta \d \mathcal{H}^{n-1}.
\end{split}
\end{equation}
Thus,
using \eqref{miracle2} in \eqref{lel0}, by Fubini's theorem we deduce
\[
\begin{split}
&\calibloc(w) - \calibloc(\widetilde{w}) \\
&\hspace{1cm} = \int_{\Omega} \int_{0}^{1} \oploc(u^{t})\big|_{t = t(x, w_{\theta}(x))} \zeta(x) \d \theta \d x + \int_{\partial \Omega}\int_{0}^{1} \neumannloc(u^{t})\big|_{t = t(x, w_{\theta}(x))} \zeta(x) \d \theta \d \mathcal{H}^{n-1}\\
&\hspace{1cm} = \int_{\Omega} \int_{\widetilde{w}(x)}^{w(x)} \oploc(u^{t})\big|_{t = t(x, \lambda)} \d \lambda \d x + \int_{\partial \Omega}\int_{\widetilde{w}(x)}^{w(x)} \neumannloc(u^{t})\big|_{t = t(x,\lambda)} \d \lambda \d \mathcal{H}^{n-1},
\end{split}
\]
where in the last line we have applied the change of variables $\lambda = \widetilde{w}(x) + \theta \zeta(x)$ for each $x$.

To show the claim \eqref{miracle}, we use the definition of the leaf-parameter function $t(x, \lambda)$.
Differentiating $u^{t}(x)\big|_{t = t(x, \lambda)} = \lambda$ with respect to $\lambda$, we have
\begin{equation}
\label{miracle3}
\partial_{t} u^{t} \big|_{t = t(x, \lambda)}  \partial_{\lambda} t(x,\lambda) = 1.
\end{equation}
Moreover,
taking the gradient of the identity $u^{t(x, w_{\theta}(x))}(x) = w_{\theta}(x)$, we obtain
\begin{equation}
\label{miracle4}
\nabla u^{t}(x)\big|_{t = t(x, w_{\theta}(x))} + \partial_t u^{t}(x)\big|_{t = t(x, w_{\theta}(x))} \nabla\big[t(x, w_{\theta}(x))\big]  = \nabla w_{\theta}(x).
\end{equation}
Multiplying \eqref{miracle4} by $\partial_{\lambda} t(x, w_{\theta}(x))$, applying \eqref{miracle3} with $\lambda = w_{\theta}(x)$, and rearranging terms
leads to \eqref{miracle}
and concludes the proof.
\end{proof}

\begin{remark}
The expression~\eqref{calib:loc:2} can be 
deduced 
in a more geometric way using the divergence theorem in $\R^{n+1}$.
As we see next, this gives an alternative proof of Theorem~\ref{alt:expression:local}.
Consider the vector field 
$X \colon \overline{\Omega} \times \R 
\to \R^{n+1} = \R^{n} \times \R$ given by
\[
X(x, \lambda) = \big(X^{x}(x,\lambda), X^{\lambda}(x, \lambda)\big),
\] 
where
\[
\begin{split}
&X^{x}(x,\lambda) := -\partial_q \lagloc\big(x, u^{t}(x), \nabla u^{t}(x)\big)\big|_{t = t(x, \lambda)}, \\
&X^{\lambda}(x,\lambda) := \Big\{-\innerprod{\partial_q \lagloc\big(x, u^{t}(x), \nabla u^{t}(x)\big)}{\nabla u^{t}(x)}+ \lagloc\big(x, u^{t}(x), \nabla u^{t}(x)\big) \Big\}\Big|_{t = t(x, \lambda)}.
\end{split}
\]
Then, 
an easy computation from \cite{AlbertiAmbrosioCabre} shows
\begin{equation*}
\div X (x, \lambda) = \oploc(u^{t})(x)\big|_{t = t(x, \lambda)},
\end{equation*}
where $\div$ is the divergence in $\R^{n+1}$,
i.e., $\div X (x, \lambda) = \div_x X^{x}(x, \lambda) + \partial_{\lambda} X^{\lambda}(x,\lambda)$.
From the definition of $X$, 
it can also be checked that $\calibloc$ can be written in the compact form
\begin{equation*}
\calibloc(w) = \int_{\Gamma_{w}} \innerprod{X}{\normalgraph} \d \mathcal{H}^{n},
\end{equation*}
where $\Gamma_{w} \subset \R^{n+1}$ is the graph of $w$ and $\normalgraph$ is the unit vector normal to $\Gamma_w$ pointing ``upwards''.
In coordinates, $\normalgraph$ reads $\normalgraph(x, w(x)) = \big(1+\abs{\nabla w(x)}^2\big)^{-1/2} (-\nabla w (x), 1).$	

\begin{figure}
\centering
\definecolor{cffeeaa}{RGB}{255,238,170}
\definecolor{cffccaa}{RGB}{255,204,170}

\begin{tikzpicture}[y=0.80pt, x=0.80pt, yscale=-5.000000, xscale=5.000000, inner sep=0pt, outer sep=0pt]

    \path[color=black,fill=blue,line join=miter,line cap=butt,miter
      limit=4.00,nonzero rule,line width=0.042pt] (29.6992,30.4629-5) .. controls
      (29.1361,30.4670-5) and (27.7079,30.4705-5) .. (26.1426,30.4746-5) --
      (26.1426,30.6738-5) .. controls (27.7079,30.6697-5) and (29.1365,30.6662-5) ..
      (29.7012,30.6621-5) -- cycle;
      
    \path[draw=blue,fill=blue,even odd rule,line width=0.023pt]
      (26.9420,30.5726-5) -- (27.3410,30.1715-5) -- (25.9420,30.5752-5) --
      (27.3430,30.9715-5) -- cycle;
      
    \path[color=black,fill=blue,line join=miter,line cap=butt,miter
      limit=4.00,nonzero rule,line width=0.042pt] (87.4590,30.1367-5) --
      (87.4570,30.3359-5) .. controls (88.0216,30.3400-5) and (89.4502,30.3435-5) ..
      (91.0156,30.3477-5) -- (91.0156,30.1484-5) .. controls (89.4503,30.1443-5) and
      (88.0221,30.1408-5) .. (87.4589,30.1367-5) -- cycle;
    
    \path[draw=blue,fill=blue,even odd rule,line width=0.023pt]
      (90.2157,30.2465-5) -- (89.8147,30.6455-5) -- (91.2157,30.2491-5) --
      (89.8168,29.8455-5) -- cycle;
      
	  	\path[scale=0.265,fill=cffeeaa,miter limit=4.00,line width=0.034pt]
	    (248.4948,170.3810) .. controls (243.8108,169.8505) and (239.2693,168.9106) ..
	    (235.2019,166.6436) .. controls (231.8972,164.8018) and (229.0456,162.6045) ..
	    (225.6890,159.2371) .. controls (222.5339,156.0717) and (219.7480,152.2290) ..
	    (217.0186,147.6191) .. controls (214.6962,143.6967) and (210.2772,135.2959) ..
	    (210.2772,134.7526) .. controls (210.2772,134.3948) and (218.9600,122.4623) ..
	    (223.6965,116.1711) .. controls (235.5441,100.4349) and (243.2341,93.0184) ..
	    (250.4808,90.5728) .. controls (252.2830,89.9646) and (254.2785,89.5708) ..
	    (256.2193,89.6943) .. controls (259.7036,89.9159) and (261.8550,91.4802) ..
	    (266.0664,95.3519) .. controls (272.3290,101.1093) and (276.7479,103.1490) ..
	    (284.2365,103.4326) .. controls (287.3718,103.5514) and (290.0198,103.1876) ..
	    (292.7569,102.6087) .. controls (295.6144,102.0042) and (298.5521,101.0776) ..
	    (302.2353,99.5207) .. controls (309.9325,96.2671) and (318.7964,90.3971) ..
	    (326.8853,83.2576) -- (330.8402,79.5508) -- (330.8512,112.7104) --
	    (330.8452,144.0525) -- (329.3680,144.9070) .. controls (327.0893,146.1783) and
	    (311.9145,153.7603) .. (306.5445,156.0283) .. controls (297.0419,160.0418) and
	    (289.6908,162.8825) .. (280.6681,165.6119) .. controls (267.8495,169.4896) and
	    (256.2580,171.2604) .. (248.4943,170.3810) -- cycle;

	  \path[scale=0.265,fill=cffccaa,miter limit=4.00,line width=0.012pt]
	    (134.9339,83.9674) .. controls (132.9261,84.0283) and (121.3157,84.5578) ..
	    (112.0391,85.6807) -- (112.3243,147.6909) .. controls (119.3143,155.6125) and
	    (126.4392,163.0363) .. (134.3733,169.4226) .. controls (140.1664,173.5658) and
	    (144.9706,177.3578) .. (151.6438,179.3610) .. controls (158.2150,181.2590) and
	    (164.8303,179.8752) .. (170.1107,177.2252) .. controls (178.3702,173.1171) and
	    (184.9008,166.4395) .. (191.0010,159.6507) .. controls (197.9067,151.8528) and
	    (204.3319,143.2860) .. (210.4058,134.8355) .. controls (209.6376,132.8856) and
	    (207.8880,129.2023) .. (206.6377,126.6526) .. controls (202.9059,118.9554) and
	    (197.6027,107.1287) .. (189.8615,99.0350) .. controls (184.0529,92.7388) and
	    (178.4376,89.0628) .. (171.9858,87.2778) .. controls (162.9049,84.6701) and
	    (153.2948,84.1772) .. (143.8922,84.0120) .. controls (142.8217,83.9990) and
	    (139.5621,83.9188) .. (138.4915,83.9232) .. controls (138.4915,83.9232) and
	    (136.9416,83.9065) .. (134.9339,83.9674) -- cycle;
    
  \path[draw=black,line join=miter,line cap=butt,miter limit=4.00,line
    width=0.868pt] (29.6669,22.6812) .. controls (29.6669,22.6812) and
    (42.0580,21.0726) .. (47.2550,23.8120) .. controls (55.0619,27.9273) and
    (55.2098,41.8742) .. (63.5872,44.6496) .. controls (71.4009,47.2383) and
    (87.4115,38.1538) .. (87.4115,38.1538);

    \path[color=red,fill=red,line join=miter,line cap=butt,miter
      limit=4.00,nonzero rule,line width=0.042pt] (41.6738,44.1641) .. controls
      (41.6697,45.7294) and (41.6662,47.1576) .. (41.6621,47.7207) --
      (41.8613,47.7227) .. controls (41.8654,47.1581) and (41.8709,45.7294) ..
      (41.8750,44.1641) -- cycle;

    \path[draw=red,fill=red,even odd rule,line width=0.023pt]
      (41.7720,44.9643) -- (42.1710,45.3654) -- (41.7746,43.9643) --
      (41.3710,45.3633) -- cycle;
      
  \path[draw=red,line join=miter,line cap=round,miter limit=4.00,line
    width=0.868pt] (29.7191,39.0766) .. controls (29.7191,39.0766) and
    (36.6360,47.4972) .. (41.4991,47.7060) .. controls (52.6552,48.1848) and
    (62.5297,17.0220) .. (70.2537,25.0861) .. controls (76.8843,32.0085) and
    (87.4889,21.1137) .. (87.4889,21.1137);

    \path[color=black,fill=black,line join=miter,line cap=butt,miter
      limit=4.00,nonzero rule,line width=0.042pt] (46.3770,20.1270) --
      (45.0566,22.9609) -- (45.2383,23.0449) -- (46.5586,20.2129) -- cycle;

    \path[draw=black,fill=black,even odd rule,line width=0.023pt] (46.1295,20.8949)
      -- (46.3231,21.4265) -- (46.5521,19.9886) -- (45.5980,21.0885) -- cycle;

    \path[color=black,fill=black,line join=miter,line cap=butt,miter
      limit=4.00,nonzero rule,line width=0.042pt] (75.5996,40.2520) --
      (75.4121,40.3203) -- (76.4355,43.1230) -- (76.6230,43.0547) -- cycle;
      
    \path[draw=black,fill=black,even odd rule,line width=0.023pt] (75.7802,41.0374)
      -- (76.2932,41.2760) -- (75.4373,40.0981) -- (75.5417,41.5503) -- cycle;

    \path[color=black,fill=red,line join=miter,line cap=butt,miter
      limit=4.00,nonzero rule,line width=0.042pt] (72.0117,22.4590) --
      (70.0000,24.8477) -- (70.1543,24.9766) -- (72.1641,22.5879) -- cycle;

    \path[draw=red,fill=red,even odd rule,line width=0.023pt]
      (71.5729,23.1355) -- (71.6213,23.6991) -- (72.2169,22.3704) --
      (71.0093,23.1839) -- cycle;

      \path[draw=black,line join=miter,line cap=butt,line width=0.856pt]
      (29.6711,11.2500) -- (29.6711,58.1042) -- (87.5015,58.1042) --
      (87.5015,11.2500);
      
      \path[draw=blue,line join=miter,line cap=butt,line width=0.856pt]
      (29.6711,22.6812) -- (29.6711,39.0766);
      
      \path[draw=blue,line join=miter,line cap=butt,line width=0.856pt]
      (87.5015,21.1137) -- (87.5015,38.1538);

  \path[fill=blue,line width=0.056pt] (40.2406,32.9619) node[above right]
    (text13332) {\color{orange} $R^+$};
    
  \path[fill=blue,line width=0.056pt] (67.4960,32.9588) node[above right]
    (text13332-5) {\color{orange} $R^-$};
    
  \path[fill=black,line width=0.056pt] (57.5607,61.5095) node[above right]
    (text13332-6) {$\Omega$};
    
  \path[fill=black,line width=0.056pt] (47.5213,21.6198) node[above right]
    (text13332-6-9) {\color{black} $\nu_{\Gamma_{w}}$};
    
  \path[fill=red,line width=0.056pt] (42.5213,45.0198) node[above right]
    (text13332-6-9-9) {\color{red} $\nu_{\Gamma_{\widetilde{w}}}$};
    
  \path[fill=red,line width=0.056pt] (72.7213,25.0198) node[above right]
    (text13332-6-9-9-4) {\color{red} $\nu_{\Gamma_{\widetilde{w}}}$};
    
  \path[fill=black,line width=0.056pt] (76.8213,41.0198) node[above right]
    (text13332-6-9-6) {\color{black} $\nu_{\Gamma_{w}}$};
    
  \path[fill=blue,line width=0.056pt] (21.4000,26.2527) node[above right]
    (text13332-6-9-6-6-0) {\color{blue} $\nu_{\partial \Omega}$};    
    
  \path[fill=blue,line width=0.056pt] (91.7674,26.2849) node[above right]
    (text13332-6-9-6-6-0-9) {\color{blue} $\nu_{\partial \Omega}$};
    
  \path[fill=blue,line width=0.056pt] (25.3829,33.2527) node[above right]
    (text13332-6-9-6-6-0) {\color{blue} $S^+$};
    
  \path[fill=blue,line width=0.056pt] (88.7674,33.2849) node[above right]
    (text13332-6-9-6-6-0-9) {\color{blue} $S^-$};
    
  \path[fill=black,line width=0.056pt] (33.5481,21.2954) node[above right]
    (text13332-6-9-6-8) {$\Gamma_w$};
    
  \path[fill=red,line width=0.056pt] (50.8572,45.6409) node[above right]
    (text13332-6-9-6-8-8) {\color{red} $\Gamma_{\widetilde{w}}$};

\end{tikzpicture}
\caption{}
\label{Fig:LocalCalibration}
\end{figure}

Consider now the regions between the graphs of $w$ and $\widetilde{w}$,
distinguishing the parts above and below each function
\[
R^{+} = \{(x,\lambda)\in \Omega \times \R \colon \widetilde{w}(x) < \lambda < w(x)\},
\]
\[
R^{-} = \{(x,\lambda)\in \Omega \times \R \colon w(x) < \lambda < \widetilde{w}(x)\},
\]
as well as their lateral boundaries on $\partial \Omega \times \R$, that is,
$S^{+} = \overline{R^{+}} \cap (\partial \Omega \times \R)$ and $S^{-} = \overline{R^{-}} \cap (\partial \Omega \times \R)$;
see Figure~\ref{Fig:LocalCalibration}.
Applying the divergence theorem to the field $X$ separately in each of the regions $R^{+}$ and $R^{-}$, 
we see that
\[
\calibloc(w) = \calibloc(\widetilde{w}) + \int_{R^{+}} \!\! \div X  \d \mathcal{H}^{n+1} - \int_{R^{-}} \!\! \div X  \d \mathcal{H}^{n+1}
- \int_{S^{+}}\! \innerprod{X}{\nu_{\partial \Omega}} \d \mathcal{H}^{n} + \int_{S^{-}} \! \innerprod{X}{\nu_{\partial \Omega}} \d \mathcal{H}^{n},
\]
where we have extended the outer normal $\nu_{\partial \Omega}$ parallel to the surface $\partial \Omega \times \R$; see Figure~\ref{Fig:LocalCalibration}.
It is also immediate to check that 
\[ 
X \cdot \nu_{\partial \Omega} =  \neumannloc(u^{t})\big|_{t = t(x, \lambda))}
\]
on $S^{+}\cup S^{-}$, with $\neumannloc$ as in \eqref{loc:neumann}.
Thus, we obtain the passage from~\eqref{calib:loc} to~\eqref{calib:loc:2} as an application of the divergence theorem.
\end{remark}

Next we prove the key null-Lagrangian property~\ref{def:calib:1} for the calibration, which follows readily from the new identity~\eqref{calib:loc:2} for $\calibloc$:
\begin{proposition} \label{property:1:loc}

Under the same hypotheses as in Theorem~\ref{alt:expression:local},
assume that, for some $t_0 \in I$, the leaves of the field $\{u^{t}\}_{t \in I}$ satisfy
	\begin{equation} \label{sup:sub:loc}
		\begin{split}
			\oploc(u^{t}) \geq 0 \quad \text{ in } \Omega 
			\quad
			\text{ for } t \geq t_0,
			\\
			\oploc(u^{t}) \leq 0 \quad \text{ in } \Omega \quad \text{ for } t \leq t_0,
		\end{split}
	\end{equation}
	where $\oploc$ is the Euler-Lagrange operator introduced in \eqref{loc:operator}.

	Then, for all $w$ in $\admissibleloc$ such that $w \equiv u^{t_0}$ on $\partial\Omega$, 
	the functional $\calibloc$ defined in \eqref{calib:loc} satisfies
	\[
	\calibloc(u^{t_0})\leq \calibloc(w).
	\]
	
	Assume, moreover, that the leaves $\{u^{t}\}_{t\in I}$ satisfy the Euler-Lagrange equation in $\Omega$, that is,
	\begin{equation} \label{extremal:loc}
		\oploc(u^{t}) = 0 \quad \text{ in } \Omega \quad \text{ for all } t \in I.
	\end{equation}
	Then, for all $w$ as above, we have
	\[
	\calibloc(w) = \calibloc(u^{t_0}).
	\]
\end{proposition}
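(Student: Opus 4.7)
The plan is to use directly the new expression~\eqref{calib:loc:2} for $\calibloc$ established in Theorem~\ref{alt:expression:local}. For any admissible $w$ with $w \equiv u^{t_0}$ on $\partial\Omega$, the inner integral in the boundary term in~\eqref{calib:loc:2} has equal upper and lower limits, so that term vanishes. Hence
\[
\calibloc(w) - \calibloc(u^{t_0}) = \int_{\Omega} \int_{u^{t_0}(x)}^{w(x)} \oploc(u^{t})(x)\Big|_{t = t(x, \lambda)} \d \lambda \d x,
\]
since $\calibloc(u^{t_0}) = \energyloc(u^{t_0})$ (which is immediate from~\eqref{calib:loc} by plugging in $w = u^{t_0}$ and noting that the Legendre-type term vanishes).

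The main step is then to show that the right-hand side is nonnegative under assumption~\eqref{sup:sub:loc}. I would argue pointwise in $x \in \Omega$ and split into cases according to the sign of $w(x)-u^{t_0}(x)$. If $w(x) \geq u^{t_0}(x)$, then for every $\lambda$ in the integration interval $[u^{t_0}(x), w(x)]$ one has $u^{t(x,\lambda)}(x) = \lambda \geq u^{t_0}(x)$; since $t \mapsto u^t(x)$ is increasing, this forces $t(x,\lambda) \geq t_0$, and hypothesis~\eqref{sup:sub:loc} gives $\oploc(u^t)(x)|_{t=t(x,\lambda)} \geq 0$, so the inner integral is nonnegative. If $w(x) < u^{t_0}(x)$, then the same monotonicity argument yields $t(x,\lambda) \leq t_0$ for $\lambda \in [w(x), u^{t_0}(x)]$, hence $\oploc(u^t)(x)|_{t=t(x,\lambda)} \leq 0$ on this interval; the reversed limits produce a sign flip that again makes the inner integral nonnegative. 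Integrating in $x$ proves $\calibloc(u^{t_0}) \leq \calibloc(w)$.

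For the second claim, under the extremal condition~\eqref{extremal:loc} we have $\oploc(u^t) \equiv 0$ in $\Omega$ for all $t \in I$, so the interior double integral in~\eqref{calib:loc:2} vanishes identically, and the boundary term vanishes as before since $w = u^{t_0}$ on $\partial\Omega$. Thus $\calibloc(w) = \energyloc(u^{t_0}) = \calibloc(u^{t_0})$, giving the null-Lagrangian property.

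I do not expect a serious obstacle here: once Theorem~\ref{alt:expression:local} is in hand, the proposition reduces to a direct sign analysis using only the monotonicity of $t \mapsto u^t(x)$ (which is part of Definition~\ref{def:foliation}) and the hypothesis~\eqref{sup:sub:loc}. The only point that deserves a brief verification is the identity $\calibloc(u^{t_0}) = \energyloc(u^{t_0})$, which is trivial from~\eqref{calib:loc}, and the well-definedness of the leaf-parameter $t(x,\lambda)$ along the segment joining $u^{t_0}(x)$ to $w(x)$, which is guaranteed by the admissibility condition $\mathrm{graph}\, w \subset \regionloc$.
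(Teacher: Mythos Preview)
Your proof is correct and follows essentially the same approach as the paper: both use the alternative expression~\eqref{calib:loc:2} from Theorem~\ref{alt:expression:local}, observe that the boundary term vanishes since $w = u^{t_0}$ on $\partial\Omega$, and then verify the nonnegativity of the inner integral via the monotonicity of $t\mapsto u^t(x)$ together with~\eqref{sup:sub:loc}. Your case analysis is a slightly more explicit version of what the paper summarizes in one sentence.
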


\begin{proof}
	Notice that,  by \eqref{calib:loc:2}, we have $\calibloc(u^{t_0}) = \energyloc(u^{t_0})$.
	Hence, assuming~\eqref{sup:sub:loc}, since the boundary integral in~\eqref{calib:loc:2} vanishes ($w \equiv u^{t_0}$ on $\partial \Omega$),
	it suffices to show that
\begin{equation}
\label{argo}
\int_{u^{t_0}(x)}^{w(x)} \oploc(u^{t})(x)\big|_{t = t(x, \lambda)} \d \lambda \geq 0.
\end{equation}
However, this is clear 
by~\eqref{sup:sub:loc} and the fact that $u^{t}$ are increasing with respect to $t$.
If we additionally have~\eqref{extremal:loc}, then the integral in \eqref{argo} is zero and the claim follows.
\end{proof}

The remaining calibration properties~\ref{def:calib:2} and~\ref{def:calib:3} can be directly obtained from the original definition~\eqref{calib:loc} of $\calibloc$.
First, we prove property~\ref{def:calib:2}:

\begin{proposition}
Assume the same hypotheses of Theorem~\ref{alt:expression:local}.
Then, for all $t \in I$, the functional $\calibloc$ defined in \eqref{calib:loc} satisfies
\[
\calibloc(u^{t}) = \energyloc(u^{t}).
\]

\end{proposition}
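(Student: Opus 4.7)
The plan is to substitute $w = u^{t}$ directly into the defining formula \eqref{calib:loc} of $\calibloc$ and simplify, using only the definition of the leaf-parameter function. No integration by parts, no foliation-regularity, and no ellipticity is needed; this is the ``trivial'' calibration property.

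The key observation is that, by the very definition of the leaf-parameter function $t \colon \regionloc \to I$ in \eqref{Eq:DefLeafParameter}, one has $t(x, u^{\tau}(x)) = \tau$ for every $\tau \in I$ and every $x \in \overline{\Omega}$, since the mapping $t \mapsto u^t(x)$ is strictly monotone. Consequently, evaluating the integrands of \eqref{calib:loc} at $w = u^{\tau}$ amounts to replacing the inner parameter $t = t(x, u^{\tau}(x))$ by the constant value $\tau$.

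Carrying out this substitution, the first integrand in \eqref{calib:loc} becomes
\[
\innerprod{\partial_q \lagloc(x, u^{\tau}(x), \nabla u^{\tau}(x))}{\bigl(\nabla u^{\tau}(x) - \nabla u^{\tau}(x)\bigr)} = 0,
\]
so the first integral vanishes identically. The second integrand reduces to $\lagloc(x, u^{\tau}(x), \nabla u^{\tau}(x))$, which is exactly the Lagrangian appearing in the definition \eqref{def:energy:loc} of $\energyloc(u^{\tau})$. Integrating over $\Omega$ yields the claimed identity $\calibloc(u^{\tau}) = \energyloc(u^{\tau})$.

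There is no real obstacle here; the only point worth checking is that $(x, u^{\tau}(x)) \in \regionloc$ for every $x \in \overline{\Omega}$ (so that the leaf-parameter is defined at $(x, u^{\tau}(x))$ and equals $\tau$), but this is immediate from the very definition of $\regionloc$ as the region foliated by the graphs of the $u^{t}$. Thus the proof is a direct one-line verification from the definitions.
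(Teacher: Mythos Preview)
Your proposal is correct and follows essentially the same approach as the paper's proof: both argue that $t(x,u^{\tau}(x))=\tau$ by definition of the leaf-parameter function, substitute this into \eqref{calib:loc}, and observe that the first integral vanishes while the second reduces to $\energyloc(u^{\tau})$.
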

\begin{proof}
Given $t_0 \in I$, 
from the definition of the leaf-parameter function it follows that $t(x, u^{t_0}(x)) =t_0$. In particular, $\nabla u^{t_0}(x) = \nabla u^{t}(x)\big|_{t = t(x, u^{t_0}(x))}$,
	and substituting in the definition of $\calibloc$ in~\eqref{calib:loc} we see that
	\[
	\calibloc(u^{t_0}) = \int_{\Omega} \lagloc\big(x, u^{t_0}(x), \nabla u^{t_0}(x)\big) \d x = \energyloc(u^{t_0}).
	\]
	Since $t_0$ was arbitrary, the claim follows.
\end{proof}


Finally, we show property~\ref{def:calib:3}:
\begin{proposition}
Under the same hypotheses as in Theorem~\ref{alt:expression:local}, the energy $\energyloc$ can be decomposed in terms of $\calibloc$ and the excess function $\excess$ as
\begin{equation*}
	\energyloc(w) = \calibloc(w) + \int_{\Omega} \excess(x, u^t(x), \nabla u^t(x), \nabla w(x))\big|_{t = t(x, w(x))} \d x,\footnote{This identity is known as the \emph{Weierstrass representation formula}}
\end{equation*}
for all $w$ in $\admissibleloc$.

As a consequence, if each leaf of the field $\{u^{t}\}_{t\in I}$ satisfies the Weierstrass necessary condition~\eqref{weierstrass:nec}, then
\[
\calibloc(w) \leq \energyloc(w),
\]
for all $w$ in $\admissibleloc$.
\end{proposition}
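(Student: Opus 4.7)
The plan is to verify the Weierstrass representation formula as a direct algebraic consequence of the definitions of $\calibloc$ in~\eqref{calib:loc} and of $\excess$ in~\eqref{excess:function}, and then to deduce the inequality from the Weierstrass necessary condition~\eqref{weierstrass:nec} applied pointwise along the field. The key observation, used repeatedly, is the identity $u^{t}(x)|_{t = t(x, w(x))} = w(x)$, coming from the defining property~\eqref{Eq:DefLeafParameter} of the leaf-parameter function. Thanks to it, when we substitute $t = t(x, w(x))$ into the first slot $\lambda = u^{t}(x)$ of $\lagloc$ (and of $\partial_{q}\lagloc$), the arguments collapse to $(x, w(x), \nabla u^{t}(x)|_{t = t(x, w(x))})$, so that only the $q$-slot still depends on the field.

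First I would unravel the integrand $\excess(x, u^{t}(x), \nabla u^{t}(x), \nabla w(x))|_{t = t(x, w(x))}$ by plugging into the definition~\eqref{excess:function} and applying the identity above in the $\lambda$-slot. This produces three terms: $\lagloc(x, w(x), \nabla w(x))$, minus $\lagloc(x, u^{t}(x), \nabla u^{t}(x))|_{t = t(x, w(x))}$, and minus $\innerprod{\partial_{q}\lagloc(x, u^{t}(x), \nabla u^{t}(x))}{\nabla w(x) - \nabla u^{t}(x)}|_{t = t(x, w(x))}$. Integrating over $\Omega$, the first term reproduces $\energyloc(w)$ from~\eqref{def:energy:loc}, while the sum of the other two integrates to $-\calibloc(w)$, matching~\eqref{calib:loc} term by term. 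Rearranging gives the representation formula stated in the proposition.

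For the inequality, once the representation is in hand one only needs $\excess \geq 0$ pointwise. For each fixed $x \in \Omega$ I would apply the Weierstrass necessary condition~\eqref{weierstrass:nec} to the particular leaf $u^{t_{\ast}}$ with $t_{\ast} := t(x, w(x))$, noting that $u^{t_{\ast}}(x) = w(x)$, and with the test direction $\xi := \nabla w(x)$. This gives non-negativity of the integrand at every $x \in \Omega$, and integration yields $\calibloc(w) \leq \energyloc(w)$.

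There is no substantive obstacle: the whole argument is a pointwise manipulation of the definitions. The only subtle point is notational, namely to read $\nabla u^{t}(x)|_{t = t(x, w(x))}$ as the partial $x$-gradient of the two-variable map $(x,t)\mapsto u^{t}(x)$ with $t$ substituted \emph{after} differentiation, and not as the total gradient of $x \mapsto u^{t(x,w(x))}(x) = w(x)$; with this convention the $q$-slot appearing in $\excess$ matches exactly the one appearing in the definition of $\calibloc$, and the two identifications above are legitimate.
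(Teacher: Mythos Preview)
Your proposal is correct and follows essentially the same approach as the paper: you substitute $\lambda = w(x) = u^{t}(x)|_{t=t(x,w(x))}$, $q = \nabla u^{t}(x)|_{t=t(x,w(x))}$, and $\widetilde{q} = \nabla w(x)$ into the definition~\eqref{excess:function} of the excess, integrate over $\Omega$, and identify the resulting terms with $\energyloc(w)$ and $-\calibloc(w)$, exactly as the paper does. Your additional remark on reading $\nabla u^{t}(x)|_{t=t(x,w(x))}$ as the partial $x$-gradient with $t$ frozen is a helpful clarification but does not change the argument.
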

\begin{proof}
For each $x\in \Omega$, take $\lambda = w(x) = u^{t}(x)|_{t = t(x, w(x))}$, $q=\nabla u^t(x)\big|_{t = t(x, w(x))}$, and $\widetilde{q} = \nabla w(x)$.
Substituting in \eqref{excess:function} and integrating in $\Omega$, 
comparing this expression with the definition of $\calibloc$ in \eqref{calib:loc},
the identity follows.
\end{proof}

\begin{remark}\label{Remark:Dirichlet-Neumann}
	If the energy functional includes reaction terms on a portion of the boundary $\Gamma_{\mathcal{N}} \subset \partial \Omega$, i.e.,
	\[\widetilde{\energyloc}(w) = \int_{\Omega} \lagloc \big(x, w(x), \nabla w(x)\big) \d x -\int_{\Gamma_{\mathcal{N}}} F(w(x)) \d \mathcal{H}^{n-1}(x),\]
	then we can still apply the methods from the local theory of calibrations to show that
	\begin{equation*}
		\begin{split}
			\widetilde{\calibloc}(w) = & \int_{\Omega} \innerprod{\partial_q \lagloc(x, u^{t}(x), \nabla u^{t}(x))}{\big(\nabla w(x) -\nabla u^{t}(x)\big)}\big|_{t = t(x, w(x))} \d x\\
			& \hspace{1cm} + \int_{\Omega} \lagloc(x, u^{t}(x), \nabla u^{t}(x))\big|_{t = t(x, w(x))} -\int_{\Gamma_{\mathcal{N}}} F(w(x)) \d \mathcal{H}^{n-1}(x) \d x
		\end{split}
	\end{equation*}
	is a calibration. Hence, one can establish the minimality of the leaves among competitors with the same boundary data only on $\partial \Omega \setminus\Gamma_{\mathcal{N}}.$
	Note that in this scenario, extremals satisfy the equation
	\[ \left \{ \begin{array}{r r l l}
		\oploc(u) &=& 0 & \text{ in } \Omega,\\
		\neumannloc(u) &=& F'(u) & \text{ in } \Gamma_{\mathcal{N}}.
	\end{array}\right .\]
	In particular, this allows to treat the extension problem for the fractional Laplacian as explained in Appendix~\ref{section:extension}.
	In this setting, one considers the Dirichlet energy in a domain of the extended space $\R^{n+1}_{+}$ with an additional potential energy on the part of its boundary lying on $\R^{n} = \partial \R^{n+1}_{+}.$
\end{remark}


\section{The calibration for the fractional Laplacian}
\label{section:fractional:laplacian}
In this section we construct a calibration for the functional
\begin{equation*}
\energyfrac(w) = 
\dfrac{c_{n,s}}{4}\iint_{\domain} \dfrac{\abs{w(x)-w(y)}^2}{\abs{x-y}^{n+2s}} \d x \d y - \int_{\Omega} F(w(x)) \d x,
\end{equation*}
where 
$s \in (0,1)$, $c_{n,s}$ is a positive normalizing constant, and $F \in C^1(\R)$. It involves the Gagliardo-Sobolev seminorm and a nonlinear potential term.

If $u$ is a 
critical point
of $\energyfrac$ with respect to functions with the same exterior data as~$u$, then $u$ satisfies the nonlocal semilinear equation 
\begin{equation}\label{euler:lagrange:frac}
	\fraclaplacian u - F'(u) = 0 \quad \text{ in } \Omega.
\end{equation}

In this setting, recall the standard subspace of locally integrable functions given by
\[
\lsn
:= \Big\{
u \in L^{1}_{\rm loc}(\R^n)
\colon 
\|u\|_{
	\lsn
} = \int_{\R^n} \frac{|u(y)|}{1+|y|^{n+2s}} \d y < +\infty\Big\}.\]

If $u \in \lsn$ is $C^{2}$ in a neighborhood of $x \in \R^n$, then 
the fractional Laplacian $\fraclaplace u(x)$
is well defined.
More generally, we only need $u$ to be $C^{2s+\alpha}$ in a neighborhood of $x$ for some small $\alpha > 0$ such that $2s+\alpha$ is not an integer. Here $C^\beta$ denotes the space $C^{k, \gamma}$ of functions with H\"{o}lder continuous $k$-th order derivatives, where $k = \lfloor\beta\rfloor$, $\gamma = \beta - k$.
In particular, for $\fraclaplace u$ to be well defined in a domain $\Omega$,
we just need that the function $u \in \lsn$ is smooth in a neighborhood of $\Omega$.
The function could be extremely wild outside the domain, as long as it satisfies the growth assumption defining $L_s^1(\R^n)$.

As explained in the Introduction, to build the calibration we will assume the existence of a field in~$\R^n$. 
In particular, we are given a family of functions $u^{t} \colon \R^n \to \R$, with $t \in I$ for some interval $I \subset \R$, satisfying certain regularity assumptions.
For clarity reasons, in the statement of Theorem~\ref{thm:frac:calibration} we have assumed that the function $(x, t)\mapsto u^t(x)$ belongs to $C^2(\R^n\times I)\cap L^{\infty}(\R^n\times I)$
and $(x, t)\mapsto \partial_t u^t(x)$ is in $L^{\infty}(\R^n \times I)$.
We have also assumed that $t \mapsto u^{t}(x)$ is increasing in $I$ for all $x \in \R^{n}$.
Thus, the graphs of $u^t$ produce a foliation of a certain region in $\R^{n} \times \R$.
Nevertheless, these conditions can be weakened, as presented next, to yield a more satisfactory theory for the fractional Laplacian.

The following is a weaker definition of field than the one in the statement of Theorem~\ref{thm:frac:calibration}, but which suffices to establish the result.
On the one hand, we allow the functions $u^t$ in the field to be ``wild'' outside a neighborhood of $\Omega$, as long as their fractional Laplacian is under control.
On the other hand, the leaves $u^t$  can touch each other (but not cross) outside $\Omega$.
That is, we need $u^t$ to be increasing in $t$ in $\overline{\Omega}$, but only nondecreasing outside.
In particular, the graphs of $u^t$ will only produce a foliation in a certain region of $\overline{\Omega}\times \R$.

\begin{definition}
	\label{def:field:frac}
	Given an interval $I \subset \R$, a  bounded domain $\Omega \subset \R^n$, and $s \in (0,1)$,
	we say that a family $\{u^{t}\}_{t \in I}$ of functions $u^{t}\colon \R^n \to \R$ is a \emph{field} (to be precise, we should say a \emph{field associated to the $s$-fractional Laplacian}, not to be inconsistent with Definition~\ref{def:foliation})
	when the following conditions hold:
	\begin{enumerate}[label= (\roman*)]
		\item \label{def:field:1} The function $(x, t) \mapsto u^{t}(x)$ is continuous in $\overline{\Omega} \times I$.
		\item \label{def:field:2} 
		The function $t \mapsto u^{t}(x)$ is
		\begin{itemize}
			\item increasing in $I$ for all $x \in \overline{\Omega}$
			\item nondecreasing in $I$ for a.e. $x \in \Omega^c$
			\item $C^1$ in $I$ for a.e. $x \in \R^n$
		\end{itemize}
		\item \label{def:field:3}
		For each compact interval $J \subset I$, we have
		\[
		\sup_{t \in J} \,\Big\{\|\partial_t u^{t}\|_{L^{\infty}(\R^n)} + \|u^{t}\|_{\lsn} + \|u^{t}\|_{C^{2s+\alpha}(N)}\Big\}  < \infty,
		\]
		for some bounded domain $N \subset \R^n$, with $\overline{\Omega} \subset N$, 
		and some $\alpha > 0$.
	\end{enumerate}
\end{definition}

Essentially, one needs a reasonable regularity of the joint function, as well as some further regularity separately in each of the variables, locally uniformly in the parameter~$t$.

By properties \ref{def:field:1} and \ref{def:field:2}, the leaf-parameter function $t = t(x, \lambda)$ 
from~\eqref{Eq:DefLeafParameter} is well-defined and continuous in the region
\begin{equation}\label{def:regiongag}
	\region := \{ (x, \lambda) \in \overline{\Omega} \times \R \colon \lambda = u^{t}(x) \text{ for some } t \in I\}.
\end{equation}

The potential $F$ will play no role in the construction of the calibration for $\energyfrac$ and,
hence, we can focus on the first term.
Let
\[
\gagliardo(u) := 
\frac{c_{n,s}}{4} \iint_{\domain} 
\dfrac{\abs{u(x)-u(y)}^2}{\abs{x-y}^{n+2s}}
\d x \d y
\]
and consider the energy space $\hgag:= \{u \in L^{1}_{\rm loc}(\R^{n}) \colon \gagliardo(u) < \infty\}$.

Let $\{u^{t}\}_{t \in I}$ be a field in the sense of Definition \ref{def:field:frac}, and let $t_0 \in I$.
We consider the functional
\begin{equation}
	\label{frac:calib}
	\calibgags(w) := \int_{\Omega} \int^{w(x)}_{u^{t_0}(x)} \fraclaplace u^{t}(x) \big|_{t = t(x, \lambda)} \d \lambda \d x 
	+ 
	\frac{c_{n,s}}{4} \iint_{\domain} \dfrac{\abs{u^{t_0}(x)-u^{t_0}(y)}^2}{\abs{x-y}^{n+2s}} \d x \d y
\end{equation}
acting on continuous functions $w \in C^0(\overline{\Omega})$ with the same exterior datum as $u^{t_0}$, and such that their graph is contained in $\region \subset \overline{\Omega}\times \R$
when restricted to $\overline{\Omega}$.
Here $\region$ has been introduced in \eqref{def:regiongag}.
We denote this set of admissible functions by $\hbdy$, that is,
\begin{equation}
\label{def:admissible:frac}
\hbdy := \big\{w \in C^0(\overline{\Omega}) \colon w = u^{t_0} \text{ on } \partial\Omega, \ \ w = u^{t_0} \text{ a.e. in } \Omega^c, \, {\rm graph }\, (w_{\arrowvert \overline{\Omega}})  \subset \region \big\}.
\end{equation}

\begin{remark}
	The functional $\calibgags$ is well defined in the set $\hbdy$.
	Let us check this. For~$x \in \Omega$ and~$\lambda$ between $u^{t_0}(x)$ and $w(x)$, we have that 
	$$t(x,\lambda) \in [t_{\rm min}, t_{\rm max}] \subset I,$$
	where
	\begin{equation} \label{def:tmin:tmax}
		t_{\rm min} = \min_{x \in \overline{\Omega}} t(x,w(x)) \ \ \text{ and } \ \ t_{\rm max} = \max_{x \in \overline{\Omega}} t(x,w(x)).\footnote{Note here that $t(x,u^{t_0}(x))\equiv t_0$ and that $t_{min} \leq t_0 \leq t_{max}$ since $w\equiv u^{t_0}$ on $\partial \Omega$.}
	\end{equation}
	Then, on the one hand, since the fractional Laplacians $\fraclaplacian u^{t}(x)$ are uniformly bounded in $x \in \Omega$ and $t \in [t_{\rm min}, t_{\rm max}]$ by \ref{def:field:3}, the iterated integral in the first term in~\eqref{frac:calib} is finite.
	On the other hand, taking into account the identity
	\[
	\frac{c_{n,s}}{2}\iint_{\domain} \frac{|u^{t}(x) - u^{t}(y)|^2}{|x-y|^{n+2s}} \d x \d y = \int_{\Omega} u^{t}(x) \fraclaplacian u^{t}(x) \d x,
	\]
	the second integral in~\eqref{frac:calib} is finite thanks to the uniform boundedness in $x \in \Omega$ and $t \in [t_{\rm min}, t_{\rm max}]$ of each $u^{t}(x)$ and of the fractional Laplacians $\fraclaplacian u^{t}(x)$.
\end{remark}

\begin{remark}
\label{remark:pv}
	The functional $\calibgags$ coincides, in the set $\hbdy$, with the functional $\calibfrac$ appearing in Theorem~\ref{thm:frac:calibration} when $F = 0$. Indeed,
	we can write
	\begin{align*}
		&\int_{\Omega} \int^{w(x)}_{u^{t_0}(x)} \fraclaplace u^{t}(x) \big|_{t = t(x, \lambda)} \d \lambda \d x \\
		&\hspace{2cm}= 
		\int_{\Omega} \d x \int^{w(x)}_{u^{t_0}(x)} \d \lambda \lim_{\varepsilon \downarrow 0} \int_{\R^n \setminus \{|x-y| > \varepsilon \}} \d y \, c_{n,s} \frac{u^{t}(x) - u^{t}(y)}{|x-y|^{n+2s}} \bigg|_{t = t(x, \lambda)} \\
		&\hspace{2cm}= 
		\lim_{\varepsilon \downarrow 0}
		\iint_{(\Omega \times \R^n) \setminus \{|x-y|> \varepsilon \}} \d x \d y\int^{w(x)}_{u^{t_0}(x)} \d \lambda 
		\, c_{n,s} \frac{u^{t}(x) - u^{t}(y)}{|x-y|^{n+2s}} \bigg|_{t = t(x, \lambda)} \\
		&\hspace{2cm}=  c_{n,s} \
		\lim_{\varepsilon \downarrow 0}
		\iint_{Q(\Omega) \setminus \{|x-y|> \varepsilon \}} \d x \d y\int^{w(x)}_{u^{t_0}(x)} \d \lambda 
		\,  \frac{u^{t}(x) - u^{t}(y)}{|x-y|^{n+2s}} \bigg|_{t = t(x, \lambda)}
	\end{align*}
	by the regularity of the field and where the last equality follows from the fact that $u^{t_0}(x) = w(x)$ for almost every $x\in \Omega^c$.
	This last expression gives meaning to the principal value in the definition of $\calibfrac$ in 
	Theorem~\ref{thm:frac:calibration} in the Introduction.
\end{remark}

Notice that the expression of $\calibgags$ in \eqref{frac:calib} only involves the Euler-Lagrange equation of the field $\{u^{t}\}_{t \in I}$ and the energy of the leaf $u^{t_0}$.
This will give both~\ref{def:calib:2} and the null-Lagrangian property~\ref{def:calib:1}. To prove \ref{def:calib:3}, we next show, in Lemma~\ref{lemma:equality}, that the functional \eqref{frac:calib} can be recast in a useful alternative form. We include Figure~\ref{Fig:Dibujo_Calibracion} for the convenience of the reader, to better identify the terms involved in the new expression~(of Lemma~\ref{lemma:equality}).

To simplify the statements and proofs below, for $\varepsilon > 0$ we use the truncated kernel
$K_{\varepsilon}(z) = c_{n,s} |z|^{-n-2s} \, \mathds{1}_{B_{\varepsilon}^c}(z)$, 
and for $u \in \lsn$ we let
$$\fraclaplace_\varepsilon u(x) = \int_{\R^n} (u(x) - u(y)) K_{\varepsilon}(x-y) \d y.$$
In particular, when $\varepsilon$ goes to zero we recover the fractional Laplacian
$\fraclaplacian u^{t}(x) = \lim_{\varepsilon \downarrow 0} \fraclaplacian_{\varepsilon}u^{t}(x)$.
We also write
\begin{equation}
\label{def:calibeps}
\begin{split}
\calibgageps(w) := &\int_{\Omega} \int^{w(x)}_{u^{t_0}(x)} \!\!\! \fraclaplace_{\varepsilon} u^{t}(x) \big|_{t = t(x, \lambda)} \d \lambda \d x 
\\	
&\hspace{3cm} + 
\frac{1}{4} \iint_{\domain} \!\!\!  |u^{t_0}(x)-u^{t_0}(y)|^2 K_{\varepsilon}(x-y) \d x \d y.
\end{split}
\end{equation}

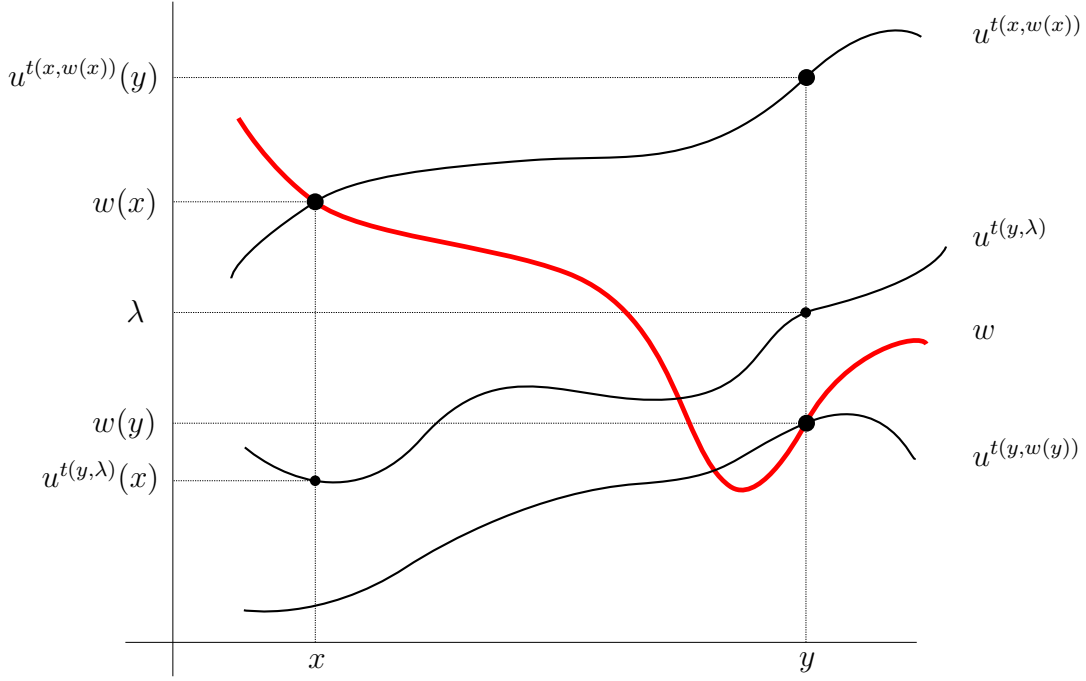
\begin{figure}
	\centering
	\begin{tikzpicture}[y=0.80pt, x=0.80pt, yscale=-4.000000, xscale=4.000000, inner sep=0pt, outer sep=0pt]
	\path[draw=black,line join=miter,line cap=butt,line width=0.056pt]
	(38.4868,38.1269) -- (38.4868,117.9294);
	
	\path[draw=black,line join=miter,line cap=butt,line width=0.056pt]
	(32.8742,113.9204) -- (126.4185,113.9204);
	
	\path[draw=black,dash pattern=on 0.45pt off 0.45pt,line join=miter,line
	cap=butt,miter limit=4.00,line width=0.056pt] (55.3248,113.9249) --
	(55.3248,61.7854);
	
	\path[draw=black,dash pattern=on 0.45pt off 0.45pt,line join=miter,line
	cap=butt,miter limit=4.00,line width=0.056pt] (113.3223,113.9162) --
	(113.3223,47.1022);
	
	\path[draw=black,dash pattern=on 0.45pt off 0.45pt,line join=miter,line
	cap=butt,miter limit=4.00,line width=0.056pt] (113.3471,47.1030) --
	(38.4891,47.1030);
	
	\path[draw=black,dash pattern=on 0.45pt off 0.45pt,line join=miter,line
	cap=butt,miter limit=4.00,line width=0.056pt] (55.3115,61.8028) --
	(38.4772,61.8028);
	
	\path[draw=black,dash pattern=on 0.45pt off 0.45pt,line join=miter,line
	cap=butt,miter limit=4.00,line width=0.056pt] (113.2939,74.8990) --
	(38.5052,74.8990);
	
	\path[draw=black,dash pattern=on 0.45pt off 0.45pt,line join=miter,line
	cap=butt,miter limit=4.00,line width=0.056pt] (113.3874,87.9952) --
	(38.4935,87.9952);
	
	
	\path[draw=red,line join=miter,line cap=butt,line width=1.8pt]
	(46.2376,51.9138) .. controls (46.2376,51.9138) and (49.6070,57.7170) ..
	(55.2914,61.8362) .. controls (60.9758,65.9554) and (75.9410,66.7603) ..
	(85.0581,70.2261) .. controls (98.1694,75.2104) and (98.4305,91.2520) ..
	(104.3005,95.4329) .. controls (107.5614,97.7554) and (111.7248,90.9278) ..
	(113.3223,87.9618) .. controls (118.0410,79.2010) and (126.7906,77.1965) ..
	(127.5611,78.6274);
	
	\path[draw=black,line join=miter,line cap=butt,line width=0.8pt]
	(126.9530,42.2921) .. controls (126.9530,42.2921) and (122.1601,38.7042) ..
	(113.3013,47.1016) .. controls (101.2381,58.5367) and (92.3992,56.0298) ..
	(80.8902,56.8783) .. controls (69.3813,57.7269) and (59.9135,58.7926) ..
	(55.2830,61.8258) .. controls (45.2445,68.4016) and (45.4024,70.8566) ..
	(45.4024,70.8566);
	
	\path[draw=black,line join=miter,line cap=butt,line width=0.8pt]
	(129.8863,67.0978) .. controls (129.8863,67.0978) and (128.9474,70.9513) ..
	(114.6738,74.4582) .. controls (107.0279,76.3367) and (108.8296,84.3099) ..
	(97.3206,85.1585) .. controls (85.8117,86.0070) and (77.5567,78.8551) ..
	(67.8380,89.8387) .. controls (58.1194,100.8223) and (46.9994,90.8514) ..
	(46.9994,90.8514);
	
	\path[draw=black,line join=miter,line cap=butt,line width=0.8pt]
	(126.1889,92.1123) .. controls (126.1259,93.2245) and (123.1029,83.5212) ..
	(112.7261,88.2284) .. controls (103.7214,92.3131) and (104.9587,94.3217) ..
	(93.4497,95.1702) .. controls (81.9407,96.0188) and (70.6776,101.9982) ..
	(66.0471,105.0314) .. controls (56.0086,111.6072) and (46.8795,110.1226) ..
	(46.8795,110.1226);

	\path[fill=black,line width=0.056pt] (33,76) node[above right](text4563) {$\lambda$};
	\path[fill=black,line width=0.056pt] (29,90) node[above right](text4563) {$w(y)$};
	
	\path[fill=black,line width=0.056pt] (23,96.5) node[above right](text4563) {$u^{t(y,\lambda)}(x)$};
	
	\path[fill=black,line width=0.056pt] (29,64) node[above right](text4563) {$w(x)$};
	\path[fill=black,line width=0.056pt] (19,49) node[above right](text4563) {$u^{t(x,w(x))}(y)$};
	
	\path[fill=black,line width=0.056pt] (54.5,117) node[above right](text4563) {$x$};
	\path[fill=black,line width=0.056pt] (112.5,117.5) node[above right](text4563) {$y$};
	
	\path[fill=black,line width=0.056pt] (133,43) node[above right](text4563) {$u^{t(x,w(x))}$};
	\path[fill=black,line width=0.056pt] (133,67) node[above right](text4563) {$u^{t(y,\lambda)}$};
	\path[fill=black,line width=0.056pt] (133,78) node[above right](text4563) {$w$};
	\path[fill=black,line width=0.056pt] (133,93) node[above right](text4563) {$u^{t(y,w(y))}$};
	
	\path[fill=black,dash pattern=on 0.76pt off 0.76pt,line cap=round,miter
	limit=4.00,line width=0.095pt] (55.3115,61.8028) circle (0.8pt);
	
	\path[fill=black,dash pattern=on 0.76pt off 0.76pt,line cap=round,miter
	limit=4.00,line width=0.095pt] (113.2939,74.8990) circle (0.5pt);
	
	\path[fill=black,dash pattern=on 0.76pt off 0.76pt,line cap=round,miter
	limit=4.00,line width=0.095pt] (113.3874,87.9952) circle (0.8pt);
	
	\path[fill=black,dash pattern=on 0.76pt off 0.76pt,line cap=round,miter
	limit=4.00,line width=0.095pt] (113.3874,47.1022) circle (0.8pt);
	
	\path[fill=black,dash pattern=on 0.76pt off 0.76pt,line cap=round,miter
	limit=4.00,line width=0.095pt] (55.3115,94.8) circle (0.5pt);
	
	\path[draw=black,dash pattern=on 0.45pt off 0.45pt,line join=miter,line
	cap=butt,miter limit=4.00,line width=0.056pt] (55.3115,94.8) --
	(38.5052,94.8);

\end{tikzpicture}
	\caption{The function $w$ (in red) and the leaves $u^t$.}
	\label{Fig:Dibujo_Calibracion}
\end{figure}

\begin{lemma}
	\label{lemma:equality}
	Given an interval $I\subset \R$, a bounded domain
	$\Omega \subset \R^n$,
	and $s \in (0,1)$,
	let $\{u^{t}\}_{t \in I}$ be a field in the sense of Definition \ref{def:field:frac}.
	Consider
	the set of admissible functions~$\hbdy$ defined in \eqref{def:admissible:frac}.
	
	Then,
	for each $\varepsilon > 0$ and $w \in \hbdy$, with $t_0 \in I$,
	the functional $\calibgageps$ defined in \eqref{def:calibeps}
	satisfies
	\[
	\begin{split}
		\calibgageps(w) = &- \frac{c_{n,s}}{2}\iint_{\domain \setminus\{ |x-y| < \varepsilon\}} \d x \d y \int_{t(x,w(x))}^{t(y,w(y))} \dfrac{u^{t}(x) - u^{t}(y)}{|x-y|^{n+2s}}\partial_{t}u^{t}(y) \d t\\
		&+ \frac{c_{n,s}}{4}\iint_{\domain \setminus \{|x-y| < \varepsilon \}} \frac{|w(x) - u^{t(x,w(x))}(y)|^2}{|x-y|^{n+2s}}  \d x \d y,
	\end{split}
	\]
	where we have extended the leaf-parameter function $x \mapsto t(x, w(x))$ continuously outside~$\Omega$ 
	by letting $t(x, w(x)) = t_0$ 
	for 
	$x \in \Omega^c$.
\end{lemma}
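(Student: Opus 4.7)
The plan is to start from the definition of $\calibgageps(w)$ in~\eqref{def:calibeps}, expand $\fraclaplace_\varepsilon u^t(x) = \int_{\R^n}(u^t(x) - u^t(y))K_\varepsilon(x-y)\d y$, and change variables in the $\lambda$-integral to the foliation parameter $t$ via $\lambda = u^t(x)$. For $x \in \overline{\Omega}$ the map $t \mapsto u^t(x)$ is $C^1$ and strictly increasing by Definition~\ref{def:field:frac}\ref{def:field:2}, so $\d \lambda = \partial_t u^t(x) \d t$, with $t = t_0$ at $\lambda = u^{t_0}(x)$ and $t = t(x,w(x))$ at $\lambda = w(x)$. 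Applying Fubini---whose hypotheses hold for each fixed $\varepsilon > 0$ thanks to the $L^\infty(\R^n)$ bound on $\partial_t u^t$, the $\lsn$ decay of $u^t$, and the truncation $|x-y| \geq \varepsilon$ built into $K_\varepsilon$---the first term of $\calibgageps(w)$ rewrites as
\[
\iint_{\Omega \times \R^n} \!\!\! K_\varepsilon(x-y) \, J(x,y) \d x \d y, \qquad J(x,y) := \int_{t_0}^{t(x,w(x))} \!\!\! (u^t(x) - u^t(y)) \partial_t u^t(x) \d t.
\]
Under the convention $t(x,w(x)) = t_0$ for $x \in \Omega^c$, the quantity $J(x,y)$ vanishes on $\Omega^c \times \R^n$, so the integration extends to $\R^n \times \R^n$. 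Using the symmetry $K_\varepsilon(x-y) = K_\varepsilon(y-x)$, I would then replace the integrand by its $(x,y)$-symmetrization $\tfrac{1}{2}(J(x,y) + J(y,x))$; since this vanishes on $\Omega^c \times \Omega^c$, the integration effectively reduces to $Q(\Omega)\setminus\{|x-y|<\varepsilon\}$.

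The heart of the argument is an algebraic identity for $J(x,y) + J(y,x)$. Setting $t_1 := t(x, w(x))$ and $t_2 := t(y, w(y))$, I would split the derivative inside $J(x,y)$ as $\partial_t u^t(x) = (\partial_t u^t(x) - \partial_t u^t(y)) + \partial_t u^t(y)$, and note that $J(y,x) = -\int_{t_0}^{t_2}(u^t(x) - u^t(y)) \partial_t u^t(y) \d t$ by antisymmetry in $x,y$. Adding these gives
\[
J(x,y) + J(y,x) = \int_{t_0}^{t_1}\!\!(u^t(x) - u^t(y))(\partial_t u^t(x) - \partial_t u^t(y)) \d t - \int_{t_1}^{t_2}\!\!(u^t(x) - u^t(y)) \partial_t u^t(y) \d t.
\]
The key identity $(u^t(x) - u^t(y))(\partial_t u^t(x) - \partial_t u^t(y)) = \tfrac{1}{2}\partial_t(u^t(x) - u^t(y))^2$ evaluates the first integral as $\tfrac{1}{2}\bigl[(u^{t_1}(x) - u^{t_1}(y))^2 - (u^{t_0}(x) - u^{t_0}(y))^2\bigr]$, and the equality $u^{t(x,w(x))}(x) = w(x)$---valid on all of $\R^n$ because $w \equiv u^{t_0}$ on $\Omega^c$ together with the extension convention---turns the first square into $(w(x) - u^{t(x,w(x))}(y))^2$.

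Substituting this formula and multiplying by $\tfrac{1}{2} K_\varepsilon(x-y)$, the contribution
$-\tfrac{1}{4} \iint_{Q(\Omega)\setminus\{|x-y|<\varepsilon\}} (u^{t_0}(x) - u^{t_0}(y))^2 K_\varepsilon(x-y) \d x \d y$ cancels exactly the second term in the definition of $\calibgageps(w)$. What remains, after writing $K_\varepsilon(x-y) = c_{n,s} |x-y|^{-n-2s}$ on the support $\{|x-y| \geq \varepsilon\}$, is precisely the right-hand side of the lemma. The step I expect to require the most care is the justification of Fubini's theorem and of the extension of integration to $\R^n \times \R^n$; this is exactly where the truncation $\varepsilon > 0$ is essential and where Definition~\ref{def:field:frac}\ref{def:field:3} enters in full force.
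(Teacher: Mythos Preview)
Your proposal is correct and follows essentially the same route as the paper's proof: expand $\fraclaplace_\varepsilon$, apply Fubini (justified via the truncation and the bounds in Definition~\ref{def:field:frac}\ref{def:field:3}), change variables $\lambda = u^t(x)$, extend the domain using the convention $t(x,w(x))=t_0$ on $\Omega^c$, symmetrize in $(x,y)$, and then split the $t$-integral so that one piece becomes the exact derivative $\tfrac{1}{2}\partial_t(u^t(x)-u^t(y))^2$. The only cosmetic difference is that you pass through $\R^n\times\R^n$ before restricting to $Q(\Omega)$, whereas the paper extends directly from $\Omega\times\R^n$ to $Q(\Omega)$; the two are equivalent since $J(x,y)$ vanishes on $\Omega^c\times\R^n$.
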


\begin{proof}
	Throughout the proof, $C_{\varepsilon}$ denotes a generic positive constant
	depending only on $n$, $s$, $|\Omega|$, and $\varepsilon$.
	It is easy to check that
	\begin{equation}
		\label{use:growth}
		\left(1 + |y|^{n+2s}\right)\,K_{\varepsilon}(x-y) \leq 
		C_{\varepsilon}
		\quad \text{ for all } x \in \Omega \text{ and } y \in \R^n.
	\end{equation}
	By \eqref{use:growth}
	we have
	\[
	K_{\varepsilon}(x-y)\, |u^{t}(x) - u^{t}(y)|  \leq 
	C_{\varepsilon}\left(\|u^{t}\|_{C^0(\overline{\Omega})} + |u^{t}(y)|\right)\left(1 + |y|^{n+2s}\right)^{-1}
	\]
	for all $x\in \Omega$, a.e. $y \in \R^n$ and all $t \in I$. 
	Hence, integrating in $y \in \R^n$,
	\begin{equation}\label{unif:aux:1}
		\int_{\R^n} \! \! K_{\varepsilon}(x-y)\,|u^{t}(x)-u^{t}(y)|  \d y
		\leq C_{\varepsilon}\left(\|u^{t}\|_{C^0(\overline{\Omega})} + \|u^{t}\|_{\lsn}\right) \ \text{ for all } x \in \Omega, \, t \in I.
	\end{equation}
	Consider now $t_{\rm min}$ and $t_{\rm max }$ given in~\eqref{def:tmin:tmax}. By properties \ref{def:field:1} and \ref{def:field:3} in Definition~\ref{def:field:frac}, 
	for $x \in \Omega$
	we have
	\begin{align*}
		&\left|\int^{w(x)}_{u^{t_0}(x)} \d \lambda \int_{\R^n} \d y \, K_{\varepsilon}(x-y)\,|u^{t}(x)-u^{t}(y)| \big|_{t = t(x, \lambda)}
		\right| \\
		&\hspace{4cm} \leq C_{\varepsilon}
		\sup_{t \in [t_{\rm min}, t_{\rm max}]}\left(\|u^{t}\|_{C^0(\overline{\Omega})} + \|u^{t}\|_{\lsn}\right) \|w - u^{t_0}\|_{C^0(\overline{\Omega})} < \infty,
	\end{align*}
	and we can apply Fubini's theorem to get
	\begin{equation}\label{eq:ex:1}
		\begin{split}
			&\int_{\Omega} \d x \int^{w(x)}_{u^{t_0}(x)} \d \lambda \fraclaplace_{\varepsilon} u^{t}(x) \big|_{t = t(x, \lambda)} \\ 
			&\hspace{3cm}= \int_{\Omega} \d x \int_{\R^n} \d y K_{\varepsilon}(x-y) \int^{w(x)}_{u^{t_0}(x)}  (u^{t}(x)-u^{t}(y)) \big|_{t = t(x, \lambda)} \d \lambda.
		\end{split}
	\end{equation}
	Applying the change of variables $\lambda = u^{t}(x)$
	for a.e. $x \in \Omega$
	in \eqref{eq:ex:1}, 
	the integral becomes
	\begin{equation}\label{eq:ex:2}
		\begin{split}
			&\int_{\Omega} \d x \int_{\R^n}\d y \ K_{\varepsilon}(x-y) \int^{w(x)}_{u^{t_0}(x)}  (u^{t}(x)-u^{t}(y)) \big|_{t = t(x, \lambda)}\d \lambda \\
			&\hspace{3cm}= \int_{\Omega} \d x \int_{\R^n}\d y \ K_{\varepsilon}(x-y) \int^{t(x,w(x))}_{t_0}  \!(u^{t}(x)-u^{t}(y)) \, \partial_t u^t(x) \d t.
		\end{split}
	\end{equation}
	Thanks to the extension of the leaf-parameter function by $t(x, w(x)) = t_0$ for $x \in \Omega^c$, 	using that $\domain = (\Omega \times \R^n) \cup (\Omega^c \times \Omega)$,
	we can rewrite the right-hand side of \eqref{eq:ex:2} as
	\begin{equation}\label{eq:ex:3}
		\begin{split}
			& \int_{\Omega} \d x \int_{\R^n}\d y \ K_{\varepsilon}(x-y) \int^{t(x,w(x))}_{t_0}  \! (u^{t}(x)-u^{t}(y)) \, \partial_t u^t(x) \d t\\
			&\hspace{3cm}= \iint_{\domain} \d x \d y\ K_{\varepsilon}(x-y) \int^{t(x,w(x))}_{t_0}  \! (u^{t}(x)-u^{t}(y)) \ \partial_t u^t(x) \d t.
		\end{split}
	\end{equation}
	
	The idea now is to use the symmetry of the domain $\domain$ to symmetrize the right-hand side of the previous identity. This will allow us to integrate an exact differential of $t$, and this will lead to the identity claimed in the lemma.
	
	Symmetrizing \eqref{eq:ex:3}, we have
	\begin{equation}\label{eq:ex:4}
		\begin{split}
			&\iint_{\domain} \d x \d y K_{\varepsilon}(x-y) \int^{t(x,w(x))}_{t_0}  (u^{t}(x)-u^{t}(y)) \partial_tu^t(x) \d t\\
			&\hspace{2cm} = \frac{1}{2}\iint_{\domain} \d x \d y K_{\varepsilon}(x-y) \int^{t(x,w(x))}_{t_0}  (u^{t}(x)-u^{t}(y)) \partial_tu^t(x) \d t\\
			&\hspace{2cm} \quad - \frac{1}{2}\iint_{\domain} \d x \d y K_{\varepsilon}(x-y) \int^{t(y,w(y))}_{t_0}  (u^{t}(x)-u^{t}(y)) \partial_t u^t(y) \d t.
		\end{split}
	\end{equation}
	Splitting the integral $\int_{t_0}^{t(y,w(y))} \d t$ into the sum $\int_{t_0}^{t(x,w(x))} \d t + \int_{t(x, w(x))}^{t(y,w(y))} \d t$ and rearranging terms, the right-hand side of \eqref{eq:ex:4} becomes
	\begin{equation}\label{eq:ex:5}
		\begin{split}
			&\frac{1}{2}\iint_{\domain} \d x \d y K_{\varepsilon}(x-y) \int^{t(x,w(x))}_{t_0}  (u^{t}(x)-u^{t}(y)) \partial_tu^t(x) \d t\\
			&\hspace{1cm} \ \ \quad - \frac{1}{2}\iint_{\domain} \d x \d y K_{\varepsilon}(x-y) \int^{t(y,w(y))}_{t_0}  (u^{t}(x)-u^{t}(y)) \partial_t{u}^t(y) \d t\\
			&\hspace{1cm}= - \frac{1}{2}\iint_{\domain} \d x \d y K_{\varepsilon}(x-y) \int^{t(y,w(y))}_{t(x, w(x))}  (u^{t}(x)-u^{t}(y)) \partial_t{u}^t(y) \d t\\
			&\hspace{1cm}\ \ \quad +\frac{1}{2}\iint_{\domain} \d x \d y K_{\varepsilon}(x-y) \int^{t(x,w(x))}_{t_0}  (u^{t}(x)-u^{t}(y)) (\partial_t{u}^t(x) - \partial_t{u}^{t}(y) )\d t.
		\end{split}
	\end{equation}

	Let us show that the integrals in the right-hand side of \eqref{eq:ex:5} are well defined.
	For the first integral, 
	taking absolute values
	and using Fubini's theorem, we have
	\begin{equation}
		\label{less0}
		\begin{split}
			&\frac{1}{2}\iint_{\domain} \d x \d y K_{\varepsilon}(x-y) \left|\int_{t(x,w(x))}^{t(y,w(y))} |u^{t}(x) - u^{t}(y)| |\partial_{t}u^{t}(y)| \d t\right| \\
			& \quad \leq \frac{1}{2}\iint_{\domain} \d x \d y K_{\varepsilon}(x-y) \int_{t_{\rm min}}^{t_{\rm max}} |u^{t}(x) - u^t(y)| 
			\|\partial_t u^t\|_{L^{\infty}(\R^n)} \d t \\
			&\hspace{1cm} = 
			\frac{1}{2}\int_{t_{\rm min}}^{t_{\rm max}} \d t \, \|\partial_t u^t\|_{L^{\infty}(\R^n)}  \iint_{\domain}   K_{\varepsilon}(x-y)  |u^{t}(x) - u^{t}(y)|  \d x \d y\\
			&\hspace{1.5cm} \leq
			\int_{t_{\rm min}}^{t_{\rm max}} \d t \, \|\partial_t u^t\|_{L^{\infty}(\R^n)}  \int_{\Omega}\d x\int_{\R^n} \d y   K_{\varepsilon}(x-y)  |u^{t}(x) - u^{t}(y)|,
		\end{split}
	\end{equation}
	where in the last line we have used 
	that $\domain = (\Omega \times \R^n) \cup (\R^n \times \Omega)$ (not a disjoint union)
	and the symmetry of $K_{\varepsilon}$.
	Applying the bound \eqref{unif:aux:1} in \eqref{less0} and by property \ref{def:field:3} in Definition~\ref{def:field:frac}, we deduce the finiteness of \eqref{less0}.
	It follows that the first integral is well defined.

	The second integral in the right-hand side of \eqref{eq:ex:5} can be integrated explicitly as
	\begin{equation}\label{eq:ex:6}
		\begin{split}
			&\frac{1}{2}\iint_{\domain} \d x \d y K_{\varepsilon}(x-y) \int^{t(x,w(x))}_{t_0}  (u^{t}(x)-u^{t}(y)) (\partial_t{u}^t(x) - \partial_t{u}^{t}(y) )\d t\\
			&\hspace{3cm}= \frac{1}{4}\iint_{\domain} \d x \d y K_{\varepsilon}(x-y) \int^{t(x,w(x))}_{t_0}  \frac{\d}{\d t}|u^{t}(x)-u^{t}(y)|^2 \d t\\
			&\hspace{3cm}= \frac{1}{4}\iint_{\domain} \d x \d y K_{\varepsilon}(x-y) |w(x)-u^{t(x,w(x))}(y)|^2\\
			&\hspace{3cm} \quad \quad -\frac{1}{4}\iint_{\domain} \d x \d y K_{\varepsilon}(x-y) |u^{t_0}(x)-u^{t_0}(y)|^2.
		\end{split}
	\end{equation}
	Concatenating the equalities \eqref{eq:ex:1}, \eqref{eq:ex:2}, \eqref{eq:ex:3}, \eqref{eq:ex:4}, 
	and \eqref{eq:ex:5}, and using \eqref{eq:ex:6}, we conclude
	\[
	\begin{split}
		&\int_{\Omega} \d x \int^{w(x)}_{u^{t_0}(x)} \d \lambda \fraclaplace_{\varepsilon} u^{t}(x) \big|_{t = t(x, \lambda)} \\ 
		&\hspace{3cm}= - \frac{1}{2}\iint_{\domain} \d x \d y K_{\varepsilon}(x-y) \int^{t(y,w(y))}_{t(x, w(x))}  (u^{t}(x)-u^{t}(y)) \partial_t{u}^t(y) \d t\\
		&\hspace{3cm} \quad+ \frac{1}{4}\iint_{\domain} \d x \d y K_{\varepsilon}(x-y) |w(x)-u^{t(x,w(x))}(y)|^2 \\
		&\hspace{3cm}\quad -\frac{1}{4}\iint_{\domain} \d x \d y K_{\varepsilon}(x-y) |u^{t_0}(x)-u^{t_0}(y)|^2,
	\end{split}
	\]
	which is the claim of the lemma.
\end{proof}

Having Lemma~\ref{lemma:equality} at hand, we can now prove the calibration property
\ref{def:calib:3}.

\begin{lemma}
	\label{lemma:inequality}
	Given an interval $I\subset \R$, a bounded domain
	$\Omega \subset \R^n$,
	and $s \in (0,1)$,
	let $\{u^{t}\}_{t \in I}$ be a field in the sense of Definition \ref{def:field:frac}.
	Consider the set of admissible functions $\hbdy$ defined in \eqref{def:admissible:frac}.
	
	Then, for all $w \in \hbdy$, with $t_0 \in I$, we have
	\[
	\calibgags(w) \leq \gagliardo(w).
	\]
\end{lemma}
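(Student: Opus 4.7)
My plan is to combine the alternative expression from Lemma~\ref{lemma:equality} with a pointwise convexity argument, and then pass to the limit in the truncation parameter $\varepsilon$.

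First I would work with the truncated functional $\calibgageps(w)$ from~\eqref{def:calibeps}. Lemma~\ref{lemma:equality} rewrites it as
\[
\calibgageps(w) = \frac{c_{n,s}}{2}\iint_{Q(\Omega)\setminus\{|x-y|<\varepsilon\}} \frac{I(x,y)}{|x-y|^{n+2s}} \d x \d y,
\]
where, writing $t_1 = t(x,w(x))$ and $t_2 = t(y,w(y))$, the integrand is
\[
I(x,y) = -\int_{t_1}^{t_2}(u^{t}(x)-u^{t}(y))\,\partial_t u^{t}(y)\d t + \tfrac12\,|w(x)-u^{t_1}(y)|^{2}.
\]
The goal reduces to the pointwise bound $I(x,y) \leq \tfrac12|w(x)-w(y)|^{2}$, since integrating against $K_\varepsilon$ over $Q(\Omega)$ yields $\calibgageps(w)\leq \gagliardoeps(w)$.

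To prove the pointwise bound, I would use the identity $w(x)=u^{t_1}(x)$ and $w(y)=u^{t_2}(y)$, and observe that the map $t\mapsto \tfrac14|u^{t_1}(x)-u^{t}(y)|^{2}$ has derivative $-\tfrac12(u^{t_1}(x)-u^{t}(y))\partial_t u^{t}(y)$. Integrating this derivative from $t_1$ to $t_2$ gives
\[
\tfrac14|w(x)-w(y)|^{2}-\tfrac14|w(x)-u^{t_1}(y)|^{2} = -\tfrac12\int_{t_1}^{t_2}(u^{t_1}(x)-u^{t}(y))\,\partial_t u^{t}(y)\d t.
\]
Subtracting this from the definition of $I(x,y)$, the bound $I(x,y)\leq \tfrac12|w(x)-w(y)|^{2}$ becomes equivalent to
\[
\int_{t_1}^{t_2}(u^{t_1}(x)-u^{t}(x))\,\partial_t u^{t}(y)\d t \leq 0.
\]
This last inequality is the crux of the argument, and it holds because $\partial_t u^{t}(y)\geq 0$ by monotonicity in~$t$, while the sign of $u^{t_1}(x)-u^{t}(x)$ is opposite to that of $t-t_1$ (again by monotonicity). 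Thus whether $t_1\leq t_2$ or $t_1\geq t_2$, the integrand has the correct sign to make the (oriented) integral nonpositive. This is the nonlocal analogue of the Weierstrass necessary condition being automatic in the elliptic (here, quadratic) case.

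Finally, I would pass to the limit $\varepsilon\downarrow 0$. For the right-hand side, $\gagliardoeps(w)\nearrow \gagliardo(w)$ by monotone convergence. For the left-hand side, since $w\equiv u^{t_0}$ in $\Omega^{c}$ and the field satisfies the regularity in~\ref{def:field:3} (in particular $u^{t}\in C^{2s+\alpha}$ uniformly in a neighborhood of $\Omega$ for $t$ in the compact interval $[t_{\min},t_{\max}]$), the kernels $\fraclaplace_{\varepsilon}u^{t}(x)\to \fraclaplace u^{t}(x)$ uniformly for $x\in\Omega$ and $t\in[t_{\min},t_{\max}]$, giving convergence of the first term in~\eqref{def:calibeps}; and the remaining double integral $\iint_{Q(\Omega)}|u^{t_0}(x)-u^{t_0}(y)|^{2}K_{\varepsilon}(x-y)\d x\d y$ converges to the corresponding Gagliardo integral by monotone convergence (which is finite since $u^{t_0}\in \hgag$ by~\ref{def:field:3}). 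Therefore $\calibgageps(w)\to \calibgags(w)$, and the desired inequality $\calibgags(w)\leq \gagliardo(w)$ follows.

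The main obstacle is the pointwise bound, whose proof relies on the perhaps non-obvious algebraic manipulation that isolates the quantity $u^{t_1}(x)-u^{t}(x)$ with the correct sign; everything else is routine. Once the pointwise inequality is in hand, both the integration and the truncation limit are standard given the regularity assumptions of Definition~\ref{def:field:frac}.
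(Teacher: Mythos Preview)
Your proposal is correct and follows essentially the same route as the paper: both use Lemma~\ref{lemma:equality}, reduce to the pointwise inequality $-\int_{t_1}^{t_2}(u^{t}(x)-u^{t}(y))\,\partial_t u^{t}(y)\,\d t \leq -\int_{t_1}^{t_2}(w(x)-u^{t}(y))\,\partial_t u^{t}(y)\,\d t$ (your version simply subtracts the explicit primitive first to isolate the factor $u^{t_1}(x)-u^{t}(x)$), verify it via the monotonicity of $t\mapsto u^{t}$, and then let $\varepsilon\downarrow 0$. The paper phrases the limit as $L^{1}(\Omega)$ convergence of $\fraclaplace_{\varepsilon}u^{t}$ locally uniformly in~$t$, but your uniform convergence on $\overline{\Omega}\times[t_{\min},t_{\max}]$ (valid since $\overline{\Omega}\subset N$ and the $C^{2s+\alpha}(N)$ bound is uniform in~$t$) is an equally acceptable justification.
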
\begin{proof}
	Let $\varepsilon > 0$.
	By Lemma \ref{lemma:equality}, we have
	\begin{equation}\label{eq:ex:7}
		\begin{split}
			\calibgageps(w) = &- \frac{1}{2}\iint_{\domain} \d x \d y K_{\varepsilon}(x-y) \int_{t(x,w(x))}^{t(y,w(y))} (u^{t}(x) - u^{t}(y))\partial_{t}u^{t}(y) \d t\\
			&+ \frac{1}{4}\iint_{\domain} |w(x) - u^{t(x,w(x))}(y)|^2K_{\varepsilon}(x-y) \d x \d y, \\
		\end{split}
	\end{equation}
	where $t(x, w(x)) = t_0$ for $x \in \Omega^c$.
	We claim that for a.e. $(x, y) \in \domain$ we have
	\begin{equation}\label{eq:ex:8}
		\begin{split}
			-\frac{1}{2}\int^{t(y,w(y))}_{t(x, w(x))}  (u^{t}(x)-u^{t}(y)) \partial_t{u}^t(y) \d t
			\leq -\frac{1}{2}\int^{t(y,w(y))}_{t(x, w(x))}  (w(x)-u^{t}(y)) \partial_t{u}^t(y) \d t.
		\end{split}
	\end{equation}
	Indeed, first of all note that $\partial_t{u}^{t}(y) \geq 0$ for a.e. $y\in \R^n$ by property~\ref{def:field:2} in Definition~\ref{def:field:frac}.
	Moreover, the quantities $u^{t}(x)$, $u^{t}(y)$, $w(x)$, and $\partial_t u^{t}(y)$ are finite for a.e. $(x,y) \in \domain$ and all $t \in I$. For those $(x, y) \in \domain$, if $t(x, u(x)) \leq t(y, u(y))$ then by property \ref{def:field:2} we have $w(x) = u^{t(x,w(x))}(x) \leq u^{t}(x)$ for $t \in [t(x, u(x)), t(y, u(y))]$, and the claim follows in this case.
	When $t(x, u(x)) \geq t(y, u(y))$ the argument is similar.
	
	The right-hand side of \eqref{eq:ex:8} can be integrated explicitly as
	\begin{equation}\label{eq:ex:9}
		\begin{split}
			-\frac{1}{2}\int^{t(y,w(y))}_{t(x, w(x))} (w(x)-u^{t}(y)) \partial_t{u}^t(y) \d t &= \frac{1}{4}\int^{t(y,w(y))}_{t(x, w(x))} \frac{\d}{\d t} |w(x) - u^{t}(y)|^{2} \d t\\
			&= \frac{1}{4}|w(x)-w(y)|^2 - \frac{1}{4}|w(x)-u^{t(x,w(x))}(y)|^2.
		\end{split}
	\end{equation}
	Now, using \eqref{eq:ex:8} and \eqref{eq:ex:9}  in \eqref{eq:ex:7}, it follows that
	\[
	\calibgageps(w) \leq \gagliardoeps(w).
	\]
	
	Finally, by property \ref{def:field:3}, $\fraclaplace_{\varepsilon} u^{t}$ converge to $\fraclaplace u^{t}$ in $L^1(\Omega)$, locally uniformly in $t$, as $\varepsilon \downarrow 0$.
	This is enough to pass to the limit in the inequality above and conclude the proof.
\end{proof}

We can finally give the proof of Theorem~\ref{thm:frac:calibration}.
We will show the identity
\begin{equation}
	\label{frac:calib:pot}
	\calibfrac(w) = \int_{\Omega} \int^{w(x)}_{u^{t_0}(x)} \big(\fraclaplace u^{t}(x) - F'(u^{t}(x))\big)\big|_{t = t(x, \lambda)} \d \lambda \d x + \energyfrac(u^{t_0})
\end{equation}
for $w \in \hbdy$, and that this functional is a calibration for $\energyfrac$ and each $u^{t}$ when the family $\{u^{t}\}_{t \in I}$ is a field of extremals, that is, when each $u^t$ solves the semilinear equation~\eqref{euler:lagrange:frac}.
In particular, each of the $u^t$ will be a minimizer.
More generally, we show that $u^{t_0}$ minimizes $\energyfrac$ if the $u^t$ above $u^{t_0}$ are supersolutions of~\eqref{euler:lagrange:frac} and the $u^t$ below are subsolutions.

\begin{proof}[Proof of Theorem~\ref{thm:frac:calibration}]
	First of all, note that for $x \in \Omega$ we have
	\[
	F(w(x)) - F(u^{t_0}(x)) =  \int_{u^{t_0}(x)}^{w(x)} F'(\lambda) \d \lambda = \int_{u^{t_0}(x)}^{w(x)} F'(u^{t(x,\lambda)}) \d \lambda 
	\]
	and, thanks to Remark~\ref{remark:pv}, the functional $\calibfrac$ given by \eqref{def:fraccalib}
	can be written simply as
	\[
	\calibfrac(w) = \calibgags(w) - \int_{\Omega} F(w(x)) \d x,
	\]
	where $\calibgags$ has been introduced in~\eqref{remark:pv}. This proves \eqref{frac:calib:pot}.
	
	{\rm(a)} 
	From~\eqref{frac:calib:pot} it is clear that $\calibfrac(u^{t_0}) = \energyfrac(u^{t_0})$, which is condition~\ref{def:calib:2}.
	To obtain~\ref{def:calib:3} we apply Lemma~\ref{lemma:inequality}, which gives
	\[
	\calibfrac(w) = \calibgags(w) - \int_{\Omega} F(w(x)) \d x \leq \gagliardo(u) - \int_{\Omega} F(w(x)) \d x = \energyfrac(w).
	\]
	
	{\rm(b)}
	To show~\ref{def:calib:1:prime}, that is, $\calibfrac(w) \geq \calibfrac(u^{t_0})=\energyfrac(u^{t_0})$, by~\eqref{frac:calib:pot} it suffices to show that
	\[
	\int^{w(x)}_{u^{t_0}(x)} \big(\fraclaplace u^{t}(x) - F'(u^{t}(x))\big)\big|_{t = t(x, \lambda)} \d \lambda \geq 0
	\]
	for $x \in \Omega$. But this is clear from the monotonicity of the field $u^{t}$ and the hypotheses in~\ref{frac:sup:sub}.
	
	On the other hand, we have already seen in subsection~\ref{subsection:calibration} how properties~\ref{def:calib:2},~\ref{def:calib:3}, and~\ref{def:calib:1:prime} yield the minimality of $u = u^{t_0}$.
	
	{\rm(c)}
	By~\eqref{frac:calib:pot}, using that each $u^{t}$ satisfies the Euler-Lagrange equation \eqref{euler:lagrange:frac}, we have that
	\[
	\calibfrac(w) - \energyfrac(u^{t_0}) = \int_{\Omega}\int_{u^{t_0}(x)}^{w(x)} \big(\fraclaplacian u^t(x) - F'(u^t(x))\big)\big|_{t = t(x, \lambda)} \d \lambda \d x = 0.
	\]
	Hence $\calibfrac(w) = \energyfrac(u^{t_0}) = \calibfrac(u^{t_0})$ for all $w \in \hbdy$.
	In particular, the functional $\calibfrac$ satisfies all three properties \ref{def:calib:2},\ref{def:calib:3}, and \ref{def:calib:1}, and thus it is a calibration.
	Choosing $t_0 = t$ for each $t \in I$, we deduce the minimality of $u^{t}$.
\end{proof}

\begin{proof}[Proof of Corollary~\ref{cor:applications}]
First of all, by a simple argument from~\cite{AlbertiAmbrosioCabre}, it suffices to prove the corollary for the class of functions $w \in C^0$ satisfying the strict inequality
\begin{equation}\label{limit:strict}
\lim_{\tau \to -\infty} u(x', \tau) < w(x', x_n) < \lim_{\tau \to +\infty} u(x', \tau) \quad \text{ for all } (x', x_n) \in \R^{n-1}\times \R.
\end{equation}
Indeed, if $w$ satisfies the non-strict inequality~\eqref{limit:condition}, then for $\theta \in (0,1)$ we consider $w_{\theta} := (1-\theta) u + \theta w$, which satisfies \eqref{limit:strict} by the strict monotonicity of $u$.
Hence, applying the corollary in the strict case, we have $\energyfrac(u) \leq \energyfrac(w_{\theta})$.
Letting $\theta \to 1^{-}$ yields the result for $w$.
Hence, we may assume \eqref{limit:strict}.

Since $u$ is bounded and $F \in C^{3}$, by regularity theory for the fractional Laplacian, we have that $u$ is at least $C^{2}$ and $\nabla u \in L^{\infty}(\R^n)$; see~\cite{RosOtonSerra}.
For each $t \in \R,$ we consider the family of translations $u^{t}(x) = u(x', x_n + t)$.
By the monotonicity and the regularity properties of $u$, the family $\{u^{t}\}_{t \in \R}$ is a field in the sense of Definition~\ref{def:field:frac}.
Moreover, by the translation invariance of the equation, it is a field of extremals.
Hence, we can apply Theorem~\ref{thm:frac:calibration} to conclude that $u$ is a minimizer in the set of admissible functions~$\admissiblefrac$ with $w = u$ in $\Omega^c$.
\end{proof}

\appendix

\section{A calibration for the extension problem of the fractional Laplacian}
\label{section:extension}

In this appendix we study minimizers of the energy functional $\mathcal{E}_{s, F}$ by using the extension technique for the fractional Laplacian.
The strategy is based on building a calibration for an auxiliary local energy $\widetilde{\mathcal{E}}_{s,F}$ in the extended space $\R_+^{n+1} = \R^n\times (0,\infty)$.
We point out that this construction did not give us, during the conception of our work, any a priori information about the form of a calibration written ``downstairs'' (i.e., in~$\R^n$) for the original energy functional $\mathcal{E}_{s, F}$. It was only after finding $\mathcal{C}_{s, F}$ by nonlocal arguments that we noticed how to deduce it, at least formally, from the extension problem. 

We denote by $(x,z)\in \R^n\times \R$ points in $\R^{n+1}_+$. Given a bounded domain $\Omega \subset \R^n$, we say that a bounded set $\widetilde{\Omega} \subset \R_+^{n+1}$ is an extension of $\Omega$ if $\partial_0\widetilde{\Omega} := \partial \widetilde{\Omega} \cap \{z=0\} \subset \Omega$. 
It is well known that there is a strong connection between the nonlocal energy functional~$\mathcal{E}_{s, F}$ and the local one
\begin{equation*}
	\widetilde{\mathcal{E}}_{s, F}(W;\widetilde{\Omega}) :=\frac{d_s}{2}\iint_{\widetilde{\Omega}} z^{1-2s} |\nabla W(x,z)|^2 \d x \d z - \int_{\partial_0\widetilde{\Omega}} F(W(x,0)) \d x,
\end{equation*}
where $d_s$ is a positive normalizing constant.
For this, given a function $u$ defined in $\R^n$ we consider $U$ the solution of
\begin{equation*}
	\beqc{\PDEsystem}
	\div(z^{1-2s} \nabla U) &=& 0 & \textrm{ in } \R^n\times (0,\infty)\,, \\
	U &=& u & \textrm{ on } \R^n\,.
	\eeqc
\end{equation*}
Here, $U$ is the so-called $s$-harmonic extension of $u$.
In~\cite[Lemma~7.2]{CaffarelliRoquejoffreSavin}, Caffarelli, Roquejoffre, and Savin showed that $u$ is a minimizer of $\mathcal{E}_{s, F}$ among functions with the same exterior data as $u$ in $\Omega^c$ if and only if, for every extension domain $\widetilde{\Omega}$, the $s$-harmonic extension $U$ of $u$ is a minimizer of $\widetilde{\mathcal{E}}_{s}(\cdot;\widetilde{\Omega})$ among functions with the same boundary condition as $U$ on $\partial_L \widetilde{\Omega} := \partial \widetilde{\Omega} \cap \{z>0\}$.

Taking into account this equivalence we can apply the classical theory of calibrations to the mixed Dirichlet-Neumann problem as explained in Remark~\ref{Remark:Dirichlet-Neumann}. 
To do this, given a field $\{u^{t}\}_{t \in I}$ in $\R^n$, for some interval $I \subset \R$, it is clear by the maximum principle that we can define a new field $\{U^{t}\}_{t \in I}$ in $\R^{n+1}_{+}$ where each leaf $U^t$ is the $s$-harmonic extension of $u^t$. 
Then, the functional
\begin{equation}\label{Ext_calib:loc}
	\begin{split}
		\widetilde{\mathcal{C}}_{s, F}(W;\widetilde{\Omega}) := &d_s\iint_{\widetilde{\Omega}} z^{1-2s} \Big\{\nabla U^t(x,z)\cdot \nabla W(x,z)-\frac{1}{2} |\nabla U^t(x,z)|^2   \Big\}_{t = t(x, z, W(x,z))} \d x \d z \\
		&\ \ \ - \int_{\partial_0\widetilde{\Omega}} F(W(x,0)) \d x
	\end{split}
\end{equation}
can be proved to be a calibration for $\widetilde{\mathcal{E}}_{s, F}$ and $U$. 
Therefore, $U$ is a minimizer of $\widetilde{\mathcal{E}}_{s, F}$, and by~\cite[Lemma~7.2]{CaffarelliRoquejoffreSavin} it follows that $u$ is a minimizer of $\mathcal{E}_{s, F}$.

We point out that although in this way we easily found a calibration for the local energy  $\widetilde{\mathcal{E}}_{s, F}(\cdot;\widetilde{\Omega})$, it was not clear at all how it translated into a calibration written ``downstairs'' for the original energy functional $\mathcal{E}_{s, F}$. 
It was only after building the calibration $\mathcal{C}_{s, F}$ by using purely nonlocal techniques that we discovered how to pass, at least formally, from $\widetilde{\mathcal{C}}_{s, F}(\cdot;\widetilde{\Omega})$ to $\mathcal{C}_{s, F}$. 
Let us explain this. 
First, as in Section~\ref{section:local}, for~$t_0 \in I$, we rewrite~\eqref{Ext_calib:loc} in the alternative form\footnote{Here, the first term is the one associated to the Euler-Lagrange operator of the local energy functional and vanishes by the definition of the field $U^t$. 
	On the other hand, the second and third terms are the ones involving the Neumann operator for the extended problem.}
\begin{equation*}
	\begin{split}
		\widetilde{\mathcal{C}}_{s, F}(W;\widetilde{\Omega}) &= - d_s \iint_{\widetilde{\Omega}} \int_{U^{t_0}(x,z)}^{W(x,z)} \div\left( z^{1-2s} \nabla U^t(x,z) \right)\Big|_{t = t(x, z, \lambda)}  \d \lambda \d z \d x \\
		&\ \ \ \ + \int_{\partial_0\widetilde{\Omega}} \int_{U^{t_0}(x,0)}^{W(x,0)} \Big\{ \fraclaplacian u^t(x)-F'(u^t(x))  \Big\}\Big|_{t = t(x, 0, \lambda)}  \d \lambda \d x \\
		&\ \ \ \ + d_s \int_{\partial_L\widetilde{\Omega}} \int_{U^{t_0}(x,z)}^{W(x,z)} z^{1-2s} \, \nu_{\partial_{L} \widetilde{\Omega}} \cdot \nabla U^t(x,z)\Big|_{t = t(x, z, \lambda)}  \d \lambda \d \mathcal{H}^{n}(x,z) \\
		&\ \ \ \ + \widetilde{\mathcal{E}}_{s, F}(U^{t_0};\widetilde{\Omega}),
	\end{split}
\end{equation*}
where $\nu_{\partial_{L} \widetilde{\Omega}}$ is the exterior normal vector to the lateral boundary $\partial_{L} \widetilde{\Omega}$. 
Finally, taking a sequence of extended domains $\widetilde{\Omega}_i$ converging to the half-space $\R^{n+1}_+$, we recover the functional $\mathcal{C}_{s, F}$ (up to an additive constant) as the formal limit of $\widetilde{\mathcal{C}}_{s, F}(\cdot;\widetilde{\Omega}_i)$.


\section{Other candidates for the fractional calibration}
\label{section:examples}
In this section we discuss three other natural candidates to be a calibration for the energy functional
$$
\energyfrac(w) = \dfrac{c_{n,s}}{4}\iint_{\domain} \dfrac{\abs{w(x)-w(y)}^2}{\abs{x-y}^{n+2s}} \d x \d y - \int_{\Omega} F(w(x)) \d x.
$$
We will be able to discard two of them since some of the calibration properties fail in these cases. 
Nevertheless, there is still one candidate for which we cannot determine whether it is a calibration or not.

Let us recall that the local counterpart of $\energyfrac$ is the functional
\[
\energyquad(w) = \dfrac{1}{2}\int_{\Omega} \abs{\nabla w(x)}^2 \d x - \int_{\Omega} F(w(x)) \d x,
\]
which admits the calibration
\[
\calibquad(w) = \int_{\Omega} \Big( \nabla u^{t}(x) \cdot (\nabla w(x)-\nabla u^{t}(x)) + \dfrac{1}{2} \abs{\nabla u^{t}(x)}^2 \Big)\Big|_{t = t(x, w(x))} \d x - \int_\Omega F(w(x)) \d x,
\]
a functional that can  also be written as
\[
\calibquad(w) = \int_{\Omega} \Big( \nabla u^{t}(x) \cdot \nabla w(x) - \dfrac{1}{2} \abs{\nabla u^{t}(x)}^2 \Big)\Big|_{t = t(x, w(x))} \d x - \int_\Omega F(w(x)) \d x.
\]

Inspired by the form of $\calibquad$, the first natural calibration candidate for $\energyfrac$ can be built replacing the gradient terms by differences and double integrals.
That is, we let
\[\begin{split}
	\mathcal{F}_{s, F}^{1}(w) &:= \frac{c_{n,s}}{2} \iint_{\domain}\dfrac{(u^{t}(x) - u^{t}(y))(w(x) - w(y))}{|x-y|^{n+2s}}\bigg|_{t = t(x, w(x))} \d x \d y \\
	& \quad \,\quad  - \dfrac{c_{n,s}}{4} \iint_{\domain} \dfrac{|u^{t}(x) - u^{t}(y)|^2}{|x-y|^{n+2s}}\bigg|_{t = t(x, w(x))} \d x \d y - \int_\Omega F(w(x)) \d x.
\end{split}\]
By using Young's inequality and the definition of the leaf-parameter function, one can directly conclude that $\mathcal{F}_{s, F}^{1}$ satisfies properties~\ref{def:calib:2} and~\ref{def:calib:3}.
It remains to check whether the null-Lagrangian property~\ref{def:calib:1} is satisfied, but we do not know how to answer this question.
For the affirmative answer, the idea would be to use the usual nonlocal integration by parts technique to obtain the Euler-Lagrange equation on the leaves.
However, since the leaf-parameter function $t$ depends on the variable $x$, we get remainder terms that we do not know how to treat.
It is then natural to look for a counterexample.
We looked at cases where an explicit field is available. 
For the trivial potential $F=0$, for which $u^t(x) = x+t$ are extremals (even if not bounded), property~\ref{def:calib:1} does not fail.
Hence, this case does not discard the candidate $\mathcal{F}_{s, F}^{1}$.
Another interesting example with explicit solutions is the Peierls-Nabarro model,
corresponding to the case $n = 1$, $s = 1/2$, and $F(u) = 1- \cos(u)$.
Here the equation $(-\Delta)^{1/2} u = \sin (u)$ in $\R$ admits the field of extremals $u^t(x) = 2 \arctan (x + t)$.
We do not know if the null-Lagrangian property holds for $\mathcal{F}_{s, F}^{1}$ in this concrete example.

It is also interesting to compare $\mathcal{F}_{s, F}^{1}$ with the calibration $\calibfrac$ constructed in Section~\ref{section:fractional:laplacian}.
There, by the alternative expression for $\calibfrac$ derived in Lemma~\ref{lemma:equality}, we see that~$\mathcal{F}_{s, F}^{1}(w)$  and $\calibfrac(w)$ would coincide if the following equality were true:
\[
\begin{split}
	&- \lim_{ \varepsilon \downarrow 0} \iint_{\domain \setminus \{|x-y| > \varepsilon\}} \dfrac{\int_{t(x,w(x))}^{t(y,w(y))} (u^{t}(x) - u^{t}(y))\partial_{t}u^{t}(y) \d t}{|x-y|^{n+2s}} \d x \d y \\
	&\hspace{3.5cm} = \iint_{\domain}\dfrac{(w(x) - u^{t(x,w(x))}(y))(u^{t(x,w(x))}(y) - w(y))}{|x-y|^{n+2s}} \d x \d y.
\end{split}
\]
However, we do not know how to prove or disprove this identity.

The functional $\mathcal{F}_{s, F}^{1}$ does not capture the symmetry in the variables $x$ and $y$ that has appeared in the two previous works on nonlocal calibrations~\cite{Cabre-Calibration, Pagliari}.
Hence, it is also natural to propose the following new candidate:
\[\begin{split}
	\mathcal{F}_{s, F}^{2}(w) &:= \frac{c_{n,s}}{2} \iint_{\domain}\dfrac{(u^{\tau}(x) - u^{t}(y))(w(x) - w(y))}{|x-y|^{n+2s}} \bigg|_{\substack{t = t(x, w(x))\\ \tau = t(y, w(y))}} \d x \d y \\
	& \quad \, \quad - \dfrac{c_{n,s}}{4} \iint_{\domain}\dfrac{|u^{r}(x) - u^{t}(y)|^2}{|x-y|^{n+2s}} \bigg|_{\substack{t = t(x, w(x))\\ \tau = t(y, w(y))}} \d x \d y - \int_\Omega F(w(x)) \d x.
\end{split}\]
As in the preceding case, we can apply Young's inequality and the definition of the leaf-parameter function to deduce that $\mathcal{F}_{s, F}^{2}$ satisfies properties~\ref{def:calib:2} and~\ref{def:calib:3}.
Nevertheless, in this case we can discard it as a calibration since the null-Lagrangian property fails even when $F=0$ and $u^t(x) = x+t$.

One could also think of a calibration candidate constructed by replacing the gradient terms in the local theory by fractional ones.
That is,
$$ \mathcal{F}_{s, F}^3(w) := \int_{\Omega} \Big\{\nabla^s u^{t}(x) \cdot \nabla^s w(x) \d x - \dfrac{1}{2} \abs{\nabla^s u^{t}(x)}^2 \Big\}\Big|_{t = t(x, w(x))} \d x - \int_\Omega F(w(x)) \d x. $$
Here, the fractional gradient is defined as
\[
\nabla^s w (x) = \widetilde{c}_{n,s}\int_{\R^n} \frac{w(x)-w(y)}{|x-y|^{n+s}}\dfrac{x-y}{\abs{x-y}}\,dy.
\]
This last candidate would be motivated by the identity 
\[
\int_{\R^n} \nabla^s v(x) \cdot \nabla^s w(x) \d x = \dfrac{c_{n,s}}{2}\iint_{\R^n\times \R^n} \dfrac{(v(x)-v(y))(w(x)-w(y))}{\abs{x-y}^{n+2s}} \d x \d y.
\]
Nevertheless, a similar equality does not hold when restricting to a domain $\Omega$, i.e.,
\[
\int_{\Omega} \nabla^s v(x) \cdot \nabla^s w(x) \d x \neq \dfrac{c_{n,s}}{2}\iint_{\domain} \dfrac{(v(x)-v(y))(w(x)-w(y))}{\abs{x-y}^{n+2s}} \d x \d y.
\]
Hence, $\mathcal{F}_{s, F}^3$ does not satisfy property~\ref{def:calib:2} and thus it is not a calibration for $\energyfrac$.



\begin{thebibliography}{10}

\bibitem{AlbertiAmbrosioCabre}
G.~Alberti, L.~Ambrosio, and X.~Cabré, \emph{On a long-standing conjecture of
  {E}. {D}e {G}iorgi: symmetry in $3d$ for general nonlinearities and a local
  minimality property}, Acta Appl. Math. \textbf{65} (2001), 9--33.

\bibitem{AngrisaniAscioneLeoneMantegazza}
F.~Angrisani, G.~Ascione, C.~Leone, and C.~Mantegazza, \emph{Appunti di calcolo
  delle variazioni}, Lecture notes, Dipartimento di Matematica e Applicazioni
  “Renato Caccioppoli” dell’Università Federico II di Napoli, 2019.

\bibitem{BarriosMedina}
B.~Barrios and M.~Medina, \emph{Equivalence of weak and viscosity solutions in
  fractional non-homogeneous problems}, Math. Ann. \textbf{381} (2021),
  no.~3-4, 1979--2012.

\bibitem{BombieriDeGiorgiGiusti}
E.~Bombieri, E.~De~Giorgi, and E.~Giusti, \emph{Minimal cones and the
  {B}ernstein problem}, Inv. Math. \textbf{7} (1969), 243--268.

\bibitem{Cabre-Calibration}
X.~Cabré, \emph{Calibrations and null-{L}agrangians for nonlocal perimeters
  and an application to the viscosity theory}, Ann. Mat. Pura Appl. (4)
  \textbf{199} (2020), 1979--1995.
  
\bibitem{CabreErnetaFelipe-Calibration2}
X.~Cabr{\'e}, I.U. Erneta, and J.C. Felipe-Navarro, \emph{Null-{L}agrangians
  	and calibrations for general nonlocal functionals and an application to the
  	viscosity theory}, in preparation.

\bibitem{CabrePoggesi}
X.~Cabré and G.~Poggesi, \emph{Stable solutions to some elliptic problems:
  minimal cones, the {A}llen-{C}ahn equation, and blow-up solutions}, Geometry
  of {PDE}s and related problems, Lecture Notes in Math., vol. 2220, Springer,
  Cham, 2018, 1--45.

\bibitem{CabreSireII}
X.~Cabré and Y.~Sire, \emph{Nonlinear equations for fractional {L}aplacians
  {II}: {E}xistence, uniqueness, and qualitative properties of solutions},
  Trans. Amer. Math. Soc. \textbf{367} (2015), 911–941.

\bibitem{CabreSolaMorales}
X.~Cabré and J.~Solà-Morales, \emph{Layer solutions in a half-space for
  boundary reactions}, Comm. Pure Appl. Math. \textbf{58} (2005), 1678--1732.

\bibitem{CaffarelliRoquejoffreSavin}
L.~Caffarelli, J.~M. Roquejoffre, and O.~Savin, \emph{Nonlocal minimal
  surfaces}, Comm. Pure Appl. Math. \textbf{63} (2010), 1111--1144.

\bibitem{ColdingMinicozzi}
T.~H. Colding and W.~P. Minicozzi~II, \emph{A course in minimal surfaces},
  Graduate Studies in Mathematics, vol. 121, American Mathematical Society,
  Providence, RI, 2011.

\bibitem{CozziPassalacqua}
M.~Cozzi and T.~Passalacqua, \emph{One-dimensional solutions of non-local
  {A}llen-{C}ahn-type equations with rough kernels}, J. Differential Equations
  \textbf{260} (2016), 6638--6696.

\bibitem{Davini}
A.~Davini, \emph{On calibrations for {L}awson's cones}, Rend. Sem. Mat. Univ.
  Padova \textbf{111} (2004), 55--70.

\bibitem{DePhilippisPaolini}
G.~De~Philippis and E.~Paolini, \emph{A short proof of the minimality of
  {S}imons cone}, Rend. Semin. Mat. Univ. Padova \textbf{121} (2009), 233--241.

\bibitem{DiNezzaPalatucciValdinoci-HitchhikerGuide}
E.~Di~Nezza, G.~Palatucci, and E.~Valdinoci, \emph{Hitchhiker's guide to the
  fractional {S}obolev spaces}, Bull. Sci. Math. \textbf{136} (2012), 521--573.

\bibitem{Evans}
L.~C. Evans, \emph{Partial {D}ifferential {E}quations: {S}econd {E}dition},
  Graduate Studies in Mathematics, AMS, 2010.

\bibitem{GiaquintaHildebrandt}
M.~Giaquinta and S.~Hildebrandt, \emph{Calculus of variations. {I}. {T}he
  {L}agrangian formalism.}, Grundlehren der Mathematischen Wissenschaften
  [Fundamental Principles of Mathematical Sciences], vol. 310, Springer-Verlag,
  Berlin, 1996.

\bibitem{KorvenpaaKuusiLindgren}
J.~Korvenp\"{a}\"{a}, T.~Kuusi, and E.~Lindgren, \emph{Equivalence of solutions
  to fractional {$p$}-{L}aplace type equations}, J. Math. Pures Appl. (9)
  \textbf{132} (2019), 1--26.

\bibitem{LiuLawson}
Z.~Liu, \emph{Stability inequalities for {L}awson cones}, Asian J. Math.
  \textbf{23} (2019), no.~6, 1001--1012.

\bibitem{Morgan-CalibrationSurvey}
F.~Morgan, \emph{Calibrations and new singularities in area-minimizing
  surfaces: a survey}, Variational methods ({P}aris, 1988), Progr. Nonlinear
  Differential Equations Appl., vol.~4, Birkh\"{a}user Boston, Boston, MA,
  1990, 329--342.

\bibitem{Pagliari}
V.~Pagliari, \emph{Halfspaces minimise nonlocal perimeter: a proof {\it via}
  calibrations}, Ann. Mat. Pura Appl. (4) \textbf{199} (2020), 1685--1696.

\bibitem{RosOtonSerra}
X.~Ros-Oton and J.~Serra, \emph{The Dirichlet problem for the fractional Laplacian: regularity up to the boundary}, J. Math. Pures Appl. 101 (2014), 275--302.

\bibitem{ServadeiValdinoci}
R.~Servadei and E.~Valdinoci, \emph{Variational methods for non-local operators
  of elliptic type}, Discrete Contin. Dyn. Syst. \textbf{33} (2013),
  2105--2137.

\end{thebibliography}
\end{document}